\DeclareMathOperator\g{girth}
\DeclareMathOperator\diam{diam}
\DeclareMathOperator\fold{Fold}
\theoremstyle{definition}
\theoremstyle{plain}		\newtheorem{prop}{Proposition}		
				\newtheorem{cor}{Corollary}
				\newtheorem{thm}{Theorem}
				\newtheorem{pbm}{Open problem}
								\newtheorem{them}{Theorem}[section]
								\newtheorem{pr}{Proposition}[section]
								\newtheorem{lemma}{Lemma}[section]
\theoremstyle{remark}
\begin{document}
\author{Goulnara Arzhantseva}
\address{Universit\"at Wien, Fakult\"at f\"ur Mathematik\\
Oskar-Morgenstern-Platz 1, 1090 Wien, Austria.
}
\address{Erwin Schr\"odinger International Institute for Mathematical Physics\\
Boltzmanngasse~9, 1090 Wien, Austria.
}
\email{goulnara.arzhantseva@univie.ac.at}

\author{Markus Steenbock}
\address{Universit\"at Wien, Fakult\"at f\"ur Mathematik\\
Oskar-Morgenstern-Platz 1, 1090 Wien, Austria.
}
\email{markus.steenbock@univie.ac.at}
\subjclass[2010]{{20F06, 20F67}} \keywords{Graphical small cancellation theory, unique product property, Kaplansky's zero-divisor conjecture, van Kampen diagrams, free products of groups.}

\title{Rips construction without unique product}

\begin{abstract}
Given a finitely presented group $Q,$  we produce a short exact sequence $1\to N \hookrightarrow G \twoheadrightarrow Q \to 1$ such that $G$ is a torsion-free Gromov hyperbolic group without the unique product property and $N$ is without the unique product property and has Kazhdan's Property (T). 
Varying $Q,$ we show a wide diversity of concrete examples of  Gromov hyperbolic groups without the unique product property.
As an application, we obtain Tarski monster groups without the unique product property.
\end{abstract}

\maketitle

\section{Introduction}

A group $G$ has the \emph{unique product property} (or said to be a \emph{unique product group}) whenever for all pairs of non-empty finite subsets $A$ and $B$ of $G$ the set of  \mbox{products} $AB$ has an element $g\in G$ with a unique representation
of the form $g=ab$ with $a\in A$ and $b\in B$. Unique product groups are torsion-free. They satisfy the \mbox{outstanding} Kaplansky zero-divisor conjecture \cites{kaplansky_problems_1957,kaplansky_problems_1970}, which states that the group ring of a torsion-free group over an integral domain has no zero-divisors \cite{cohen_zero_1974}. Rips and Segev~\cite{rips_torsion-free_1987} gave the first examples of torsion-free groups without the unique product property. In~\cite{markus}, the second author has generalized their examples, 
proved that the (generalized) Rips-Segev groups are Gromov hyperbolic, and provided an uncountable family of non unique product groups. Other examples of torsion-free groups without  unique product can be found~in~\cites{passman_algebraic_1977,promislow_simple_1988,carter_new_2013}.\smallskip

Our goal is to construct new concrete examples of non unique product groups with diverse algebraic and geometric properties.
We realize this by extending further our construction of generalized Rips-Segev groups and by showing that every finitely presented group is a non-trivial quotient of a torsion-free Gromov hyperbolic non unique product group.

\begin{thm}\label{main}
 Let $Q$ be  a finitely presented group. Then there exists a short exact  sequence \[1\to N \hookrightarrow G \twoheadrightarrow Q \to 1\] such that 
 \begin{itemize}
  \item  $G$ is a torsion-free Gromov hyperbolic group without the unique product property, 
  \item $N$ is a $2$-generated subgroup of $G$. 
 \end{itemize}
\end{thm}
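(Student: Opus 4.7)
\medskip

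\noindent\textbf{Proof plan.} The strategy is to merge two existing constructions: the classical Rips construction, which realizes every finitely presented group as a quotient $G \twoheadrightarrow Q$ with $G$ hyperbolic and kernel $2$-generated via added conjugation relators; and the generalized Rips--Segev construction of~\cite{markus}, which produces torsion-free hyperbolic non unique product groups by graphical small cancellation over a free group of rank~$2$. The goal is to arrange the latter as a sub-presentation on two new letters $a,b$ and then couple it with Rips-type conjugation relators that encode the generators of $Q$, while preserving the graphical small cancellation condition used in~\cite{markus}.

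\medskip

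\noindent Fix a finite presentation $Q=\langle x_1,\dots,x_n \mid r_1,\dots,r_m\rangle$. Let $\Gamma$ be the labeled graph over the alphabet $\{a,b\}^{\pm 1}$ furnished by~\cite{markus}, whose graphical small cancellation presentation defines a torsion-free hyperbolic group $H=H(\Gamma)$ without unique product. I would define
\[
G = \big\langle x_1,\dots,x_n,a,b \ \big|\ \Gamma;\ r_i u_i(a,b)\ (1{\leq}i{\leq}m);\ x_j^{\pm 1} y x_j^{\mp 1} w_{j,y,\pm}(a,b)\ (1{\leq}j{\leq}n,\ y\in\{a,b\})\big\rangle,
\]
where the words $u_i,w_{j,y,\pm}$ in $a,b$ are chosen as sufficiently long ``tail'' pieces drawn from a reservoir of words that are $\Gamma$-reduced, mutually distinct, and exhibit no long common subwords with each other or with the labels read along paths in $\Gamma$. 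With such a choice, quotienting by the subgroup $N$ generated by $a,b$ gives exactly $Q$, so that $N$ is the normal closure $\langle\!\langle a,b\rangle\!\rangle$ and is $2$-generated.

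\medskip

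\noindent The core technical step, and the main obstacle, is to verify that the combined relator system still satisfies a graphical small cancellation condition strong enough to apply the results of~\cite{markus} (hyperbolicity, torsion-freeness, and the non unique product property coming from the piece of presentation carried by $\Gamma$). Concretely, I must show that any piece shared between two distinct relators of the combined presentation is short compared with the girth of $\Gamma$ and with the length of each of the added tails $u_i,w_{j,y,\pm}$. This decomposes into checking pieces of three types: (a) between two $\Gamma$-relators, controlled by the original graphical $C'(\lambda)$ condition of $\Gamma$; (b) between a $\Gamma$-relator and a Rips-type relator, controlled because the $x_j$-letters force any piece to lie entirely inside a single tail, where genericity of the chosen tails keeps pieces short; and (c) between two Rips-type relators, handled as in the standard Rips construction by choosing the tails generically. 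A probabilistic or greedy choice of the tails (taking them long enough depending on $\Gamma$, $n$, and $m$) ensures all three estimates hold simultaneously.

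\medskip

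\noindent Once the graphical small cancellation hypothesis is verified, the standard consequences apply: $G$ is torsion-free and Gromov hyperbolic, the natural map $H \to G$ is injective, and each $\Gamma$-relator embeds isometrically, so the witnesses to the failure of the unique product property constructed in~\cite{markus} inside $H$ persist in $G$. The homomorphism $G\to Q$ sending $x_j\mapsto x_j$ and $a,b\mapsto 1$ is well defined (the tails $u_i,w_{j,y,\pm}$ become trivial in $Q$) and surjective, with kernel the normal subgroup generated by $a,b$. This yields the required short exact sequence with $N=\langle\!\langle a,b\rangle\!\rangle$ a $2$-generated subgroup of $G$ and $G$ a torsion-free hyperbolic non unique product group, as claimed.
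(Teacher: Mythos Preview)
Your overall architecture matches the paper's: merge the Rips construction with a generalized Rips--Segev presentation via graphical small cancellation, choosing the Rips tails so that no long pieces arise between the two blocks of relators. The paper, however, works in the \emph{free product} small cancellation framework over $\langle x_1,\dots,x_m,a\rangle * \langle b\rangle$ (syllable length) rather than in word-length small cancellation: pieces in both the Rips relators and the Rips--Segev graph $\mathcal{RS}$ have syllable length at most~$3$ while all cycles have syllable length at least~$19$, so the $Gr_*'(1/6)$ condition is immediate and every relator is written down explicitly---no generic or probabilistic choice of tails is needed. Your word-length approach with generic tails is plausible but vaguer; since $\mathcal{RS}$ contains very long $a$-runs, the cross-block piece analysis in word length requires more care than you indicate.

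There is one genuine gap. You assert that ``the natural map $H\to G$ is injective'' as a standard consequence of graphical small cancellation. It is not: $G$ is a quotient of $H*F(x_1,\dots,x_n)$, and the small cancellation hypothesis alone does not preclude new relations among $a,b$ arising in $G$. What \emph{is} available, and what actually suffices, is that the defining graph embeds in the Cayley graph of $G$, so that distinct vertices of $\mathcal{RS}$ represent distinct elements of $G$; this yields injectivity of the finite witness set $A$. The paper proves exactly this weaker statement (its Theorem~\ref{T: up}) by a van Kampen diagram argument over the free product, and handles the small set $B=\{1,a,b,ab\}$ separately by observing that every relation in $G$ has syllable length at least~$19$. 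You should replace the claim that $H$ embeds with this more modest---and sufficient---argument.
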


The assumption on finite presentation of $Q$ can be relaxed and our method still provides a non unique product group $G$. 
In such a general setting, $G$ is not any more Gromov hyperbolic, although  it is a direct limit of those  (in fact, of graphical small cancellation groups, see more details in Section~\ref{sec:graphical}).

\begin{thm}\label{main2}
 Let $Q$ be  a finitely generated  group. Then there exists a short exact  sequence \[1\to N \hookrightarrow G \twoheadrightarrow Q \to 1\] such that 
 \begin{itemize}
  \item $G$ is a torsion-free group without the unique product property which is a direct limit of Gromov hyperbolic groups, 
  \item $N$ is a $2$-generated subgroup of $G$. 
 \end{itemize}
\end{thm}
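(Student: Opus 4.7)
The plan is to reduce to Theorem~\ref{main} by writing $Q$ as a direct limit of finitely presented groups and then invoking Theorem~\ref{main} compatibly at each stage. Fix a finite generating set $S$ of $Q$ and enumerate a (possibly infinite) defining set of relators $R=\{r_1,r_2,\ldots\}$. Set $Q_n:=\langle S\mid r_1,\ldots,r_n\rangle$; each $Q_n$ is finitely presented, the canonical maps $\pi_n\colon Q_n\twoheadrightarrow Q_{n+1}$ are surjective, and $Q=\varinjlim (Q_n,\pi_n)$.

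Next I would apply the construction underlying Theorem~\ref{main} to each $Q_n$ to obtain short exact sequences $1\to N_n\to G_n\to Q_n\to 1$ with $G_n$ torsion-free hyperbolic non-UPP and $N_n=\langle a,b\rangle$. The substantive point, and the one on which the whole argument hinges, is to arrange the graphical small cancellation labellings so that the construction is compatible with the tower $\{\pi_n\}$: the labelled graph used to build $G_{n+1}$ should be obtained from the one for $G_n$ by attaching a single additional labelled component encoding $r_{n+1}$, while preserving the graphical small cancellation condition on the combined diagram. This yields compatible surjections $G_n\twoheadrightarrow G_{n+1}$ fitting into a commutative ladder with the $\pi_n$. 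Setting $G:=\varinjlim G_n$ and passing the short exact sequences to the limit gives $1\to N\to G\to Q\to 1$ with $N=\varinjlim N_n$ generated by the images of $a,b$, hence $2$-generated. Torsion-freeness of $G$ is inherited: a torsion element of $G$ would come from an element $g_n\in G_n$ with $g_n^k$ eventually trivial in the system, so trivial already in some $G_m$ (by the direct limit property of surjective systems), contradicting torsion-freeness of $G_m$.

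The main obstacle, and the crux of the proof, is to show that $G$ lacks the unique product property. Concretely, the finite subsets $A,B\subseteq G_1$ witnessing non-UPP must survive along the entire tower without any product $ab\in A\cdot B$ acquiring a unique representation in $G=\varinjlim G_n$. I would fix once and for all the $A,B$ produced by the Rips--Segev type construction at stage $1$, and then verify by a van Kampen diagram argument, using the graphical small cancellation condition on each extended labelling, that the multiple representations of every element of $A\cdot B$ present in $G_1$ are represented by diagrams that remain reduced (and distinct) after adjoining the relators for $r_2,r_3,\ldots$. This is where the careful compatibility in the inductive step above pays off: no additional small cancellation piece can shorten or identify these diagrams, so non-UPP descends through each $G_n$ and persists in the limit $G$.
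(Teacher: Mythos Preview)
Your proposal is correct and is essentially the paper's own argument: the paper applies the construction of Section~\ref{sec:graphical} to an infinite labeled graph whose finite subunions define exactly your tower $(G_n)$, so that $G$ is the direct limit of hyperbolic non-UPP groups. One minor simplification for your last step: equalities $a_1b_1=a_2b_2$ persist automatically under quotients, so the only thing the van Kampen diagram argument needs to establish is that $A$ and $B$ inject into each $G_n$ (this is precisely Theorem~\ref{T: up}), after which non-UPP of $G$ is immediate.
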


Varying $Q$ in these theorems, we obtain many new groups without the unique product property that have various algebraic and algorithmic properties, see 
Section~\ref{sec:algo}.
 
 We extend our construction further and produce strongly non-amenable examples.

\begin{thm}
 Let $Q$ be  a finitely generated  group. Then there exists a short exact  sequence \[1\to N \hookrightarrow G \twoheadrightarrow Q \to 1\] such that 
 \begin{itemize}
  \item $G$ is a torsion-free group without the unique product property which is a direct limit of Gromov hyperbolic groups, 
  \item $N$ is a subgroup of $G$ with Kazhdan's Property (T) and without the unique product property. 
 \end{itemize}
\end{thm}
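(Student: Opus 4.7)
The plan is to combine the proof of Theorem~\ref{main2} with the Rips--Ollivier--Wise trick of absorbing a Kazhdan subgroup into the kernel. Fix once and for all a finitely presented torsion-free group $H$ with Kazhdan's Property (T), for instance a torsion-free uniform lattice in $\mathrm{Sp}(n,1)$ with $n\geqslant 2$; write $H=\langle h_1,\dots,h_k\mid \mathcal{R}_H\rangle$. Let $Q=\langle X\mid S\rangle$ be the given finitely generated group.

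I would rerun the graphical small cancellation construction used for Theorem~\ref{main2}, but over the free product $H\ast F(X)$ instead of a free group. The generalized Rips--Segev graph $\Gamma$ from \cite{markus} is labelled by $\{h_1,\dots,h_k\}^{\pm 1}$, and, on top of the graphical relators coming from $\Gamma$, I impose: the defining relators $\mathcal{R}_H$ of $H$; the relators $S$ of $Q$; and Rips-type conjugation relators $x h_i x^{-1}=w_{x,i}$ for $x\in X^{\pm 1}$ and $1\leqslant i\leqslant k$, with $w_{x,i}$ generic words in $h_1,\dots,h_k$. These conjugation relators force the image of $H$ to be normal in $G$ and to project trivially onto $Q$, so that the resulting short exact sequence $1\to N\to G\to Q\to 1$ has $N$ equal to the image of $H$ in $G$.

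Since $N$ is then a quotient of $H$, it inherits Kazhdan's Property (T). The failure of the unique product property in $N$ is produced, exactly as in Theorem~\ref{main2}, by the labels of $\Gamma$: the Rips--Segev construction \cite{markus} exhibits finite subsets $A,B\subseteq N$ whose product set $AB$ contains no uniquely represented element, and the graphical small cancellation hypothesis guarantees that these witnesses persist in $G$. The torsion-freeness of $G$ and its presentation as a direct limit of Gromov hyperbolic groups follow from the graphical small cancellation theory over free products, using that the free factor $H$ is itself torsion-free and Gromov hyperbolic.

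The main obstacle will be to verify the graphical small cancellation conditions simultaneously for the Rips--Segev graph $\Gamma$, the Rips conjugation relators $x h_i x^{-1} w_{x,i}^{-1}$, and the defining relators $\mathcal{R}_H$, now measured with respect to the free product length in $H\ast F(X)$. This requires extending the graphical small cancellation framework of \cite{markus} to free products of torsion-free hyperbolic groups, and choosing the Rips words $w_{x,i}$ long and generic enough so that the small cancellation axioms survive the presence of both $\Gamma$ and $\mathcal{R}_H$. Once this framework is in place, the remainder of the argument runs in parallel with the proof of Theorem~\ref{main2}: small cancellation yields $G$ as a direct limit of torsion-free hyperbolic groups, the Rips--Segev labels give the non--unique product witnesses inside $N$, the conjugation relators produce the required short exact sequence, and Property (T) of $H$ descends to~$N$.
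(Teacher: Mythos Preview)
Your approach has a structural gap. You propose to run graphical small cancellation over the free product $H\ast F(X)$ with $H$ a fixed Kazhdan lattice, labeling the Rips--Segev graph $\Gamma$ by the generators $h_1,\dots,h_k$ of $H$. But any word in the $h_i$'s lies in the single factor $H$ and therefore has free product (syllable) length~$1$ in $H\ast F(X)$. Consequently every cycle of $\Gamma$ has $|\cdot|_\ast$-length~$1$, and the $Gr_\ast'(1/6)$ condition is vacuously violated by any nontrivial piece. The same defect hits your Rips relators: $x h_i x^{-1} w_{x,i}^{-1}$ has syllable length at most~$4$ no matter how long you take $w_{x,i}\in H$, so the ``long generic tail'' cannot separate pieces. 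Note too that if $H$ is already a free factor there is nothing to gain by imposing $\mathcal{R}_H$, and that the framework of \cite{markus} (Theorem~\ref{T: markus}) already covers free products of hyperbolic factors---so there is no theory to extend; the obstruction is intrinsic to the syllable metric on $H\ast F(X)$.

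The paper's route is designed precisely to avoid this. Rather than importing a ready-made Kazhdan group as a free factor, it invokes Gromov's probabilistic construction (Theorem~\ref{T: ap1}) to produce a finite graph $\mathcal{T}$, labeled over a free product of \emph{cyclic} groups $\langle a_1\rangle\ast\cdots\ast\langle a_{35}\rangle\ast\langle b\rangle$, whose labeling already satisfies $Gr_\ast'(1/6)$ and whose associated group has Property~(T). Over cyclic factors the syllable length is comparable to word length, so by staggering the $a_1$-exponents one can make $\mathcal{T}$, the Rips relators, and the Rips--Segev graph $\mathcal{RS}$ satisfy $Gr_\ast'(1/6)$ simultaneously (this is Theorem~\ref{T: T-rips}). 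The kernel $N$ is then a quotient of $\langle a_1,\dots,a_{35},b\mid\mathcal{T}\rangle$ and hence inherits Property~(T); non-unique product for $N$ comes from $\mathcal{RS}$ as in Theorem~\ref{T: up}; and the passage from finitely presented to finitely generated $Q$ goes via the direct-limit argument of Theorem~\ref{main2}.
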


We provide, in particular, first examples of Property (T) groups without the unique product property. 

\begin{cor}\label{C: T}
 There are torsion-free Gromov hyperbolic groups with Kazhdan's Property (T) and without the unique product property. 
\end{cor}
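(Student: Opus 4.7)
The plan is to combine the preceding theorem with a careful choice of the quotient $Q$. First, I would take $Q$ to be a torsion-free Gromov hyperbolic group with Kazhdan's Property~(T); classical examples include uniform lattices in $\mathrm{Sp}(n,1)$ for $n\geq 2$, all of which are finitely presented. Next, I would apply (the proof of) the preceding theorem to this particular $Q$. Because $Q$ is finitely presented, I expect the graphical small cancellation data underlying the construction to be arrangeable as a finite presentation, which should upgrade the conclusion from a direct limit of Gromov hyperbolic groups to a genuine torsion-free Gromov hyperbolic group $G$. This parallels the passage from Theorem~\ref{main2} to Theorem~\ref{main}, where finite presentation of $Q$ is exactly what makes the limit construction stabilize.

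With the resulting short exact sequence $1 \to N \hookrightarrow G \twoheadrightarrow Q \to 1$ in hand, the final step invokes the classical fact that Kazhdan's Property~(T) is stable under group extensions: since $N$ carries Property~(T) by construction and $Q$ carries Property~(T) by our choice, $G$ inherits Property~(T). Combined with torsion-freeness, hyperbolicity, and failure of the unique product property, all provided by the preceding theorem, this yields the corollary.

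The main obstacle, and the only step that is not immediate, is the second one: one must verify that the relators used both to impose Property~(T) on $N$ and to destroy the unique product property of $G$ can be taken to form a finite set when $Q$ itself is finitely presented. In the direct limit construction for arbitrary finitely generated $Q$, one adds relators in infinitely many stages; here the task is to show that with a finite presentation of $Q$ as input, finitely many stages suffice and the small cancellation conditions are preserved. Once that finiteness is secured, hyperbolicity of $G$ follows from standard results on finite graphical small cancellation presentations, and the remaining conclusions are routine.
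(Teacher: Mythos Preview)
Your approach is correct, and it is close in spirit to one of the two arguments the paper offers: the finitely-presented case of the preceding construction is carried out explicitly in Section~5 (Theorem~\ref{T: T-rips}), where the Property~(T) graph $\mathcal{T}$, the Rips relators, and the Rips--Segev graph are all finite, so $G$ is a genuine torsion-free Gromov hyperbolic group. Your ``main obstacle'' is thus already dispatched there. One simplification: your choice of $Q$ with Property~(T) and the appeal to extension stability are unnecessary. Taking $Q$ trivial makes $G=N$, and $N$ already has Property~(T), is torsion-free hyperbolic, and lacks unique product; no extension argument is needed.

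The paper also gives a second, rather different, route that it emphasizes as the explicit one: apply Ol'shanskii's common-quotient theorem (Theorem~\ref{olsh}) to $H_1*H_2$, where $H_1$ is the non-unique-product hyperbolic group from Theorem~\ref{main} and $H_2$ is any hyperbolic Property~(T) group (e.g.\ a lattice in $\mathrm{Sp}(n,1)$), with $M\supseteq A\cup B\cup AB$. The resulting quotient $\overline{G}$ is torsion-free hyperbolic, inherits Property~(T) from the surjected $H_2$, and fails unique product because $M$ injects. The trade-off: your route (and the paper's first) yields groups with explicit graphical small cancellation presentations coming from a probabilistic construction, while the Ol'shanskii route avoids randomness entirely and gives concrete examples, at the cost of invoking deeper small cancellation over hyperbolic groups.
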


Our approach combines three constructions: the famous Rips construction \cite{rips_subgroups_1982}, the construction by Rips-Segev of torsion-free groups 
without the unique product property~\cite{rips_torsion-free_1987}, and Gromov's groundbreaking construction of graphical small cancellation groups 
with Property (T), cf.~\cite{gromov_random_2003}*{1.2.A, 4.8.(3)}, based on his spectral characterization of this property~\cites{silberman_random_2003,ollivier_kazhdan_2007}. 

An essential technical point in our proofs is that we explain all three constructions using the graphical small cancellation theory over the free product of groups. 
We show, in particular, that Gromov's probabilistic construction of graphs defining groups with Property (T) is flexible under taking edge subdivisions. 
 \begin{thm}\label{thm:both} For all $m> 64$, there exists a  finite  connected graph $\mathcal{T}$ labeled by $\{a_1,\ldots, a_m\}
 \
 $ such that the labeling satisfies the $Gr_*'(1/6)$--small cancellation condition over the free product $\langle a_1 \rangle * \ldots * \langle a_m \rangle$, the labeling satisfies the $Gr'(1/6)$--small cancellation condition with respect to the word length metric, and the group with $a_1,\ldots, a_m$ as generators and the labels of the cycles of $\mathcal{T}$ as relators has Property~(T).
 \end{thm}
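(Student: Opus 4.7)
The plan is to combine Gromov's probabilistic construction of graphs whose labels present Property (T) groups with the standard probabilistic verification of graphical small cancellation, using edge subdivisions to reconcile the two conditions in the free product setting.

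First, I would start with a sequence $(\Gamma_n)_n$ of finite connected base graphs of girth tending to infinity whose vertex links have uniformly bounded second eigenvalue---the canonical examples come from Gromov's Cayley graphs of finite quotients of cocompact lattices in hyperbolic buildings, or from suitable random regular graphs as used by Silberman and Ollivier--Wise. I would then subdivide each edge of $\Gamma_n$ into $k$ sub-edges for a large fixed $k$; because the new degree-two vertices have trivial links, the spectral hypothesis required for the spectral criterion of Property (T) is preserved, while every cycle length is scaled by $k$. A uniformly random labeling of the subdivided graph by $\{a_1,\ldots,a_m\}$ with $m>64$ then, by the Silberman--Ollivier spectral criterion, presents a group with Property (T) with probability tending to $1$ as $n$ grows.

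Next, I would establish $Gr'(1/6)$ by a union bound: labeled paths of a given length $\ell$ in the subdivided graph number at most polynomially in $|V(\Gamma_n)|$, while labels of length $\ell$ number $m^\ell$, so the probability that some piece exceeds $1/6$ of its shortest embracing cycle tends to zero for $m$ large enough relative to the combinatorial parameters (and $m>64$ is sufficient after subdivision). The genuinely new step is to upgrade $Gr'(1/6)$ to $Gr_*'(1/6)$ over the free product $\langle a_1\rangle * \ldots * \langle a_m\rangle$. Here piece length is measured in the free product syllable metric, so an unbroken run of sub-edges labeled by the same generator collapses into a single syllable and could inflate piece length disproportionately. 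One verifies that on a uniformly random subdivided labeling such long monochromatic runs occur with probability decaying geometrically in $m$; a second union bound, run in parallel with the one for $Gr'(1/6)$, then shows that with probability tending to $1$ the subdivided graph also satisfies $Gr_*'(1/6)$.

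The main obstacle is orchestrating the choice of $k$, $n$, and $m$ so that the three probabilistic events---Property (T), $Gr'(1/6)$, and $Gr_*'(1/6)$---hold simultaneously with positive probability. Subdivision is the decisive tool: raising $k$ stretches every cycle linearly while piece lengths measured in syllables grow far more slowly, so enough subdivision forces all relevant small cancellation ratios below $1/6$. Once the parameters are compatible, a single sample labeling (replacing the output by a connected component if necessary) yields the required graph $\mathcal{T}$. The numerical threshold $m>64$ reflects the point at which the spectral gap inequalities in the Silberman--Ollivier criterion and the combinatorial inequalities in the small cancellation count become jointly feasible.
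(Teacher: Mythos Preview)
Your overall strategy is the paper's: take an expander family with growing girth and bounded diameter-to-girth ratio (the paper uses the Selberg family), subdivide each edge $j$ times, label uniformly at random, invoke Silberman's spectral criterion for Property~(T), and run separate union-bound estimates for the two small cancellation conditions, choosing $j$ large enough that all three events hold with probability tending to~$1$.

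Three points need sharpening. First, your diagnosis of the free-product difficulty is inverted: monochromatic runs make the syllable length \emph{shorter}, so the danger is that \emph{cycles} become short in $|\cdot|_*$, not that pieces become long. The paper controls this by proving (via Cauchy--Schwarz and the return probability of the simple random walk on $F_m$) that $P(|W_l|_*\leq L)\leq (2m-1)^{L/2}(lm/(2m-1))^{1/2}\bigl(\sqrt{2}/(2m)^{1-\eta(m)}\bigr)^l$ for a uniformly random word $W_l$, and then deduces that folding is a local quasi-isometric embedding into the free-product metric. Second, your claim that under subdivision ``piece lengths measured in syllables grow far more slowly'' than cycle lengths is not how subdivision helps: a fresh random labeling of the subdivided graph scales both quantities alike; the role of $j$ is purely to make the exponential decay in $j\cdot\g(\mathcal{G})$ dominate the path-counting factor $d^{\,O(\g(\mathcal{G}))}$. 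Third, the threshold $m>64$ has nothing to do with the spectral criterion (which works for all $m\geq 2$); it comes solely from requiring $2^{1/2}\bigl(\sqrt{2}\,(2m)^{\eta(m)}/(2m)\bigr)^{1/4}<1$ in the free-product quasi-isometry estimate, which via Kesten's formula $(2m)^{\eta(m)}=2\sqrt{2m-1}$ reduces to an explicit inequality in $m$.
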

 
The graph $\mathcal{T}$ is produced by assigning to every edge of an expander graph a letter and an orientation independently uniformly at random.  It is an interesting technical outcome that the small cancellation conditions over the free group and over the free product  can be combined in such graphs, see Section~\ref{sec:ap}.  
This flexibility in the small cancellation condition is useful for constructing new groups with exotic properties. \smallskip

Our examples are in a strong contrast with the previously known constructions of torsion-free groups without the unique product property, 
alternative to the Rips-Segev groups \cites{passman_algebraic_1977,promislow_simple_1988,carter_new_2013}. Indeed,  
all those constructions yield infinite groups with the Haagerup property\footnote{Groups in \cites{passman_algebraic_1977,promislow_simple_1988} are solvable, hence, a-T-menable; groups in~\cite{carter_new_2013}  are a-T-menable as they have $\mathbb{Z}^k\times\mathbb{F}_m$ as a finite index subgroup.} (= a-T-menable groups, in the terminology of Gromov, see~\cite{ChCJJV}), and, hence, groups which do not  have Property~(T). \smallskip

An alternative proof of our Corollary \ref{C: T}, although with no probabilistic and, hence, genericity aspects underlying Theorem~\ref{thm:both}, can be obtained using the small cancellation theory over hyperbolic groups.

\begin{thm}[Ol'shanskii, cf. \cite{olshanskii_residualing_1993}*{Th. 2}]\label{olsh}
Let $G=H_1*H_2$ be the free product of two non-elementary torsion-free Gromov hyperbolic groups and $M\subseteq H_1$ be a finite subset.
Then $G$ has a non-elementary torsion-free Gromov hyperbolic  quotient $\overline{G}$  such that the canonical projection $G \twoheadrightarrow \overline{G}$ is surjective on $H_2$ and injective on $M$. 
 \end{thm}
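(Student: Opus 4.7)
The plan is to realize $\overline{G}$ as a small cancellation quotient of $G$ in the sense developed by Ol'shanskii for hyperbolic groups. As a starting point, the free product of two non-elementary torsion-free hyperbolic groups is itself non-elementary, torsion-free and hyperbolic, so $G$ is a legitimate ambient group for the theory of \cite{olshanskii_residualing_1993}.

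Fix a finite generating set $\{a_1,\ldots,a_n\}$ of $H_1$ and, without loss of generality, enlarge $M$ to contain it. Since $H_2$ is non-elementary torsion-free hyperbolic, it contains a non-abelian free subgroup $F=\langle x, y\rangle$ whose non-trivial elements are all loxodromic in $G$. Inside $F$, pick pairwise distinct elements $w_1,\ldots,w_n$ whose word length in $G$ exceeds a constant multiple of $\max\{|m|_G : m\in M\}$, the constant being dictated by the small cancellation parameters below. A standard genericity argument --- picking long generic reduced words in $\{x,y\}^{\pm 1}$ --- guarantees that the family
\[
r_i := a_i w_i^{-1}, \qquad i=1,\ldots,n,
\]
satisfies the $C'(\lambda)$ small cancellation condition over the hyperbolic group $G$ for any prescribed sufficiently small $\lambda > 0$: any large common piece between cyclic conjugates of two distinct $r_i^{\pm 1}$ would force a long common piece between the corresponding $w_i$'s in $F$, which does not happen generically.

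Invoke Ol'shanskii's small cancellation theory over hyperbolic groups \cite{olshanskii_residualing_1993} for the presentation
\[
\overline{G} := G / \langle\langle r_1, \ldots, r_n \rangle\rangle.
\]
This produces a torsion-free hyperbolic group $\overline{G}$ together with a Greendlinger-type lemma: any geodesic $G$-word representing the identity in $\overline{G}$ must contain more than half of a cyclic conjugate of some relator $r_i^{\pm 1}$.

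The desired properties follow quickly. Surjectivity on $H_2$ is automatic, since each $a_i$ equals $w_i \in H_2$ in $\overline{G}$. Injectivity on $M$ is a direct application of the Greendlinger lemma: any non-trivial identification among elements of $M$ would force a short $G$-word to contain more than half of some $r_i$, impossible by the length choice of the $w_i$. The same short-word argument applied to non-trivial elements of $F\subseteq H_2$ shows that $\overline{G}$ still contains a non-abelian free subgroup, hence is non-elementary. The principal obstacle is thus the combinatorial step of arranging the $r_i$ to satisfy small cancellation in the hyperbolic sense, which is exactly where Ol'shanskii's theory --- measuring pieces via the hyperbolic geometry of $G$ rather than by free cancellation --- is indispensable.
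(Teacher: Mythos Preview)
The paper does not give a proof of this theorem; it is stated with attribution to Ol'shanskii and simply cited as \cite{olshanskii_residualing_1993}*{Th.~2}. So there is nothing in the paper to compare your argument against directly.

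Your outline is the right strategy --- kill the factor $H_1$ by equating each of its generators to a long element of $H_2$, using small cancellation over the hyperbolic group $G$ to control the quotient --- and it is essentially how one derives statements of this type from Ol'shanskii's work. Two points would need tightening in a full proof. First, Ol'shanskii's conditions in \cite{olshanskii_residualing_1993} are not literally a $C'(\lambda)$ condition; his relators are built from high powers of carefully chosen loxodromic elements, and the ``pieces'' are measured through the geometry of geodesic words and contiguity subdiagrams rather than free overlap. Your relators $r_i = a_i w_i^{-1}$ fit his framework only after one arranges the $w_i$ to be of the form demanded there (or else one invokes a later formulation such as Delzant's or the group-theoretic Dehn filling theorems). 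Second, your non-elementarity argument needs a little more: you must ensure that two \emph{specific} independent elements of $H_2$ survive, not merely that short words do, since the $w_i$ themselves are words in $F$ and could in principle interfere. Choosing the $w_i$ in a proper free factor of $F$ and applying injectivity to the complementary factor handles this.
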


This result, together with our main Theorem~\ref{main},  indeed yields Corollary \ref{C: T}.  
Take for $H_1$ our torsion-free Gromov hyperbolic group without the unique product property for the sets $A$ and $B$ produced by Theorem~\ref{main}.
Take for $H_2$ a Gromov hyperbolic group with Property (T) (e.g. a discrete subgroup of finite covolume in $Sp(n, 1)$) and for $M$ a finite subset of $H_1$ containing $A$, $B$, and $AB$.
By Theorem~\ref{olsh}, we get a torsion-free Gromov hyperbolic group $\overline{G}$ with Property (T) and without the unique product property.\smallskip 

Our two ways  to construct groups in Corollary \ref{C: T} have distinct outcomes: the first approach shows the existence of graphical small cancellation
presentations of such groups and the preceding argument provides first explicit examples.

A further strong consequence of our results is the existence of Tarski monster groups without the unique product property.

\begin{cor}\label{cor:allcyclic}
 There are torsion-free Property (T) groups $G$ without the unique product property such that all proper subgroups of $G$ are cyclic. 
\end{cor}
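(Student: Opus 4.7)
The idea is to combine the group produced by Corollary~\ref{C: T} with a quotient theorem of Ol'shanskii that produces torsion-free Tarski monsters out of non-elementary torsion-free hyperbolic groups. Start with a torsion-free Gromov hyperbolic group $H$ having Kazhdan's Property~(T) and without the unique product property, as supplied by Corollary~\ref{C: T}. Let $A, B \subseteq H$ be the finite non-empty subsets witnessing the failure of unique product, and set
\[
M \;=\; A \cup B \cup AB \;\subseteq\; H.
\]
Because $H$ has Property~(T) and is infinite, it is non-amenable and, in particular, non-elementary.

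Next, I would invoke the following theorem of Ol'shanskii, which is in the same spirit as Theorem~\ref{olsh} and is proved by small cancellation theory over hyperbolic groups: for every non-elementary torsion-free Gromov hyperbolic group $H$ and every finite subset $M \subseteq H$, there exists an epimorphism $\pi \colon H \twoheadrightarrow G$ onto a torsion-free infinite group $G$ in which every proper subgroup is cyclic, and such that $\pi$ is injective on $M$. Apply this to our $H$ and $M$. Denote $\bar A = \pi(A)$, $\bar B = \pi(B)$. Since $\pi|_M$ is injective, for every $g \in AB$ the two or more representations $g = ab$ with $a \in A, b \in B$ give rise to two or more representations $\pi(g) = \bar a \bar b$ with $\bar a \in \bar A, \bar b \in \bar B$; thus $G$ fails the unique product property. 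Property~(T) is inherited by quotients, so $G$ has Property~(T). By construction $G$ is torsion-free with all proper subgroups cyclic, finishing the proof.

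\textbf{The main obstacle.} The bulk of the work is hidden in the invoked Ol'shanskii quotient theorem: one must simultaneously arrange that (i) every non-cyclic subgroup is killed, (ii) no torsion is introduced, and (iii) no collapse occurs on the prescribed finite set $M$. This is handled by an inductive process that enumerates candidate non-cyclic two-generated subgroups and, at each stage, adjoins a carefully chosen relator of sufficiently large length so that the relative small cancellation condition over the current hyperbolic group is preserved, the new relator lies outside a Greendlinger-type neighbourhood of $M$, and torsion-freeness is maintained. Once this deep result is granted, Property~(T) and the failure of unique product follow immediately from the fact that $G$ is a quotient of $H$ in which $M$ survives.
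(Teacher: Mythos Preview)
Your proof is correct and follows essentially the same route as the paper: invoke Ol'shanskii's quotient theorem (from \cite{olshanskii_residualing_1993}) to pass from a non-elementary torsion-free hyperbolic group to a torsion-free Tarski monster while remaining injective on the finite set $A\cup B\cup AB$, then note that Property~(T) survives in quotients and that non-unique-product survives because the witnessing sets embed. The paper's argument is the same, except that it literally references a group ``given by Theorem~\ref{main}'' as the starting point; your choice to start instead from Corollary~\ref{C: T} is the cleaner one, since it makes the Property~(T) conclusion immediate.
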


Indeed, it follows from~\cite{olshanskii_residualing_1993}*{Th. 2}  that every non-cyclic torsion-free Gromov hyperbolic group $G$ has a non-abelian torsion-free quotient $\widetilde{G}$ such that all proper subgroups of $\widetilde{G}$ are cyclic, and that $G\twoheadrightarrow \widetilde{G}$ is injective on any given finite subset of $G$ \cite{olshanskii_residualing_1993}*{ Cor. 1}. Applied to a finite subset containing $A$, $B$, and $AB$ in a group $G$ given by Theorem~\ref{main}, this yields Tarski monster groups without the unique product property.

In particular, we obtain the first examples of groups without the unique product property all of whose proper subgroups are unique product groups.   
Again, explicit recursive presentations are available for such new monster groups.

Our constructions are of particular interest also in the context of the following two important open problems.

\begin{pbm}
 Do the Rips-Segev groups without the unique product property satisfy the Kaplansky zero-divisor conjecture?
\end{pbm}

 Combining recent deep results~\cites{schreve_strong_2013,linnell_strong_2012,agol_virtual_2013}, we observe that the Kaplansky zero-divisor conjecture holds for all torsion-free CAT(0)-cubical\footnote{A group is \emph{CAT(0)-cubical} if it admits  a proper cocompact action on a CAT(0)-cubical complex.} Gromov hyperbolic groups over the field of complex numbers. Our groups from Corollary~\ref{C: T} are not CAT(0)-cubical as they are infinite Property (T) groups.
Thus, it follows from our results that the CAT(0)-cubulation cannot solve the Kaplansky zero-divisor conjecture for all Gromov hyperbolic groups without the unique product property.

 \begin{pbm}\label{Q: res fin}
  Is every Gromov hyperbolic group residually finite? 
 \end{pbm}

 If $Q$ is finite then $N$ in our construction is normal of finite index and without the unique product property. 
Every residually finite Gromov hyperbolic group has a finite index subgroup with the unique product property by a result of Delzant~\cite{delzant_sur_1997}.
Then the following questions arise naturally. 
\begin{itemize}
\item Does there exist a Gromov hyperbolic group all of whose normal finite index subgroups are without the unique product property?
\item Does there exist a Gromov hyperbolic group all of whose subgroups of index at most $ k$, for a given $k \geqslant 2$, 
  are without the unique product property? 
  \end{itemize}
  
The last question has recently been  answered in affirmative~\cite{GMS}, via a further application of the generalized Rips-Segev graphs. \medskip


\noindent
{\bf Acknowledgments.} 
Both authors are partially supported by the ERC grant ANALYTIC no.\  259527 of G. Arzhantseva.
The second author is a recipient of the DOC fellowship of the Austrian Academy of Sciences and was partially supported by the University
 of Vienna research grant 2013.

We thank the Erwin Schr\"odinger International Institute for Mathematical Physics in Vienna for supporting the workshop ``Geometry and Computation in Groups'' and participants A. Minasyan and F. Dahmani, who, after they learned from our results, pointed out that Ol'shanskii's small cancellation theory helps  to produce more Property (T) groups without the unique product property. 

\section{Rips construction via the free product of groups}
In this section, we review the original Rips construction~\cite{rips_subgroups_1982} but regard it in the context
of small cancellation theory over the free product of groups.  This allows us to explicit the choice of group relators in an easier way and hence, to
provide concrete group presentations of the middle group, both in the original and in our new short exact sequences of groups, see Theorems~\ref{main} and~\ref{main2}.

Let $Q=\langle x_1,\ldots,x_m \mid r_1,\ldots, r_n\rangle$ be a finitely presented group. 

Let $a,b\not\in \{x_1^{\pm 1}, \ldots, x_m^{\pm 1}\}.$ We
consider the free product $\langle x_1,\ldots, x_m, a\rangle \ast \langle b\rangle,$
equipped with the  \emph{free product length} $\vert \cdot \vert_{\ast}$, also known as the \emph{syllable-length}~\cite{lyndon_combinatorial_1977}.

Let $H$ be a group defined by a presentation $\langle x_1, x_2,\ldots, x_m, a, b \mid R\rangle,$ where the set $R$ of relators consists of the following $n+4m$ words: 
\begin{align}
 &    r_i a^{10i-9}b a^{10i-8}ba^{10i-7}b\cdots ba^{10i}b,\, 1\leqslant i \leqslant n;\\
  & x_j^{-1}ax_ja^{10(n+j)-9}ba^{10(n+j)-8}ba^{10(n+j)-7}b\cdots ba^{10(n+j)}b,\,    1\leqslant j \leqslant m,\\
& x_jax_j^{-1}a^{10(m+n+j)-9}ba^{10(m+n+j)-8}ba^{10(m+n+j)-7}b\cdots ba^{10(m+n+j)}b,\, 1\leqslant j \leqslant m; \nonumber \\
   & x_j^{-1}bx_ja^{10(2m+n+j)-9}ba^{10(2m+n+j)-8}ba^{10(2m+n+j)-7}b\cdots ba^{10(2m+n+j)}b,\,  1\leqslant j \leqslant m,\\
   & x_jbx_j^{-1}a^{10(3m+n+j)-9}ba^{10(3m+n+j)-8}ba^{10(3m+n+j)-7}b\cdots ba^{10(3m+n+j)}b,\,  1\leqslant j \leqslant m, \nonumber
  \end{align}
  and the length on $H$ is  the free product length induced from $\langle x_1,\ldots,x_m, a\rangle \ast \langle b\rangle$.

In the terminology of the \emph{small cancellation theory over the free product}~\cite{lyndon_combinatorial_1977}*{Ch. V. 9},  the \emph{pieces} in these relators have length ($=$ the free product length) at most $3$ and the relators have length $19$. Hence, this presentation of $H$ satisfies the classical  free product $C'(1/6)$--small cancellation condition over  $\langle x_1,\ldots,x_m,a\rangle * \langle b\rangle$. 

It follows that $H$ is torsion-free~\cite{lyndon_combinatorial_1977}*{Th. 10.1, Ch. V} and Gromov hyperbolic~\cite{pankratev_hyperbolic_1999}. Let $N$ be the subgroup generated by $a$ and $b$. The relators (2) and (3) guarantee that $N$ is normal in $H$. Thus, $N$ coincides with the kernel of the epimorphism $H\twoheadrightarrow Q$ which maps $a\mapsto 1$, $b\mapsto 1$ and $x_i\mapsto x_i$ for all $i$. We conclude the following significant result of Rips.
\begin{prop}\label{rips} \cite{rips_subgroups_1982}
 Let $Q$ be  a finitely presented group. Then there exists a short exact  sequence \[1\to N \hookrightarrow H \twoheadrightarrow Q \to 1\] such that 
 \begin{itemize}
  \item $H$ is a torsion-free Gromov hyperbolic group, 
  \item $N$ is a non-trivial normal $2$-generator subgroup of $H$. 
  \end{itemize}
  \end{prop}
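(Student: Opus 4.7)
The plan is to verify, in order, the five assertions implicit in the construction already displayed in the excerpt: (a) the presentation satisfies the free-product small cancellation condition $C'(1/6)$ over $F := \langle x_1,\ldots,x_m,a\rangle * \langle b\rangle$; (b) $H$ is torsion-free and Gromov hyperbolic; (c) the subgroup $N := \langle a, b\rangle$ is normal in $H$; (d) the natural map $H \to Q$ sending $x_i \mapsto x_i$ and $a, b \mapsto 1$ has kernel exactly $N$; and (e) $N$ is non-trivial.

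First I would verify (a) by inspecting the ``tail'' $T_k := a^{10k-9} b a^{10k-8} b \cdots b a^{10k} b$ appended to each relator. Two such tails from different decades $k \ne \ell$ disagree in every block of consecutive $a$'s, so any common syllable subword is delimited by the $b$-separators on each side and therefore has free-product length at most $3$. Internal overlaps within a single $T_k$ are similarly bounded because the exponents $10k-9, \ldots, 10k$ are pairwise distinct. The heads $r_i$ and $x_j^{\pm 1} y^{\pm 1} x_j^{\mp 1}$ with $y \in \{a,b\}$ cannot match long subwords of tails either, since the tails contain no $x_i$-syllables. Hence every piece has free-product length at most $3$, whereas each relator has free-product length at least $19$, giving $3/19 < 1/6$.

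Then (b) follows immediately: by~\cite{lyndon_combinatorial_1977}*{Th.~10.1, Ch.~V}, the $C'(1/6)$-condition over a free product of torsion-free groups (here $F$ being free) yields a torsion-free quotient, and Gromov hyperbolicity is the content of~\cite{pankratev_hyperbolic_1999}. For (c), relators (2)--(4) exhibit each of $x_j^{\pm 1} a^{\pm 1} x_j^{\mp 1}$ and $x_j^{\pm 1} b^{\pm 1} x_j^{\mp 1}$ as equal in $H$ to an explicit word in $a$ and $b$; thus conjugation by every generator of $H$ preserves $N$, so $N \triangleleft H$. For (d), the map $H \to Q$ is well-defined because each relator in $R$ maps to $1$ in $Q$ (the type-(1) relators map to $r_i = 1$; the type-(2)--(4) relators collapse entirely). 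Its kernel contains $N$; conversely, $H/N$ is presented by imposing $a = b = 1$ on the generators of $H$, and the surviving relations recover exactly the defining relations of $Q$, so the kernel equals $N$.

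Finally, (e) follows from Greendlinger's lemma for $C'(1/6)$ presentations over free products: any non-trivial element of $F$ that becomes trivial in $H$ must contain more than half of a cyclic conjugate of some relator as a subword, which neither $a$ nor $b$ does. The only genuinely delicate step is the piece analysis in (a); the arithmetic progression of exponents $10k-9, \ldots, 10k$ packed into disjoint decades across the $n+4m$ relators is chosen precisely to make both inter-relator and intra-relator overlap bookkeeping routine, and this is the step I would present in greatest detail.
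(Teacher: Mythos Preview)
Your proposal is correct and follows essentially the same route as the paper: the paper's argument (given in the paragraphs immediately preceding the proposition) likewise checks that pieces have free-product length at most $3$ against relator length $19$, then invokes \cite{lyndon_combinatorial_1977}*{Th.~10.1, Ch.~V} and \cite{pankratev_hyperbolic_1999} for (b), and uses relators (2)--(3) for normality and the obvious epimorphism for (d). You supply more detail in the piece analysis and add an explicit Greendlinger-type argument for non-triviality of $N$, which the paper leaves implicit.
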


\section{Rips construction without unique product}\label{sec:graphical}
We define here our group $G$ required by Theorem~\ref{main}. 
We begin with definitions.

 Let $\mathcal{G}$ be a (finite or infinite) graph with directed edges. A labeling $\ell$ of $\mathcal{G}$ by $\{x_1,\ldots, x_n,a,b\}$ assigns to every edge $e$ a letter $y$ equal to $x_i$, $a$ or $b$, so that going along $e$ in positive direction we read $y$ and going along $e$ in negative direction we read $y^{-1}$.

 A path in $\mathcal{G}$ is \emph{reduced} if 
it has no backtracking. Every reduced labeled path in  $\mathcal{G}$ bears as label a word in letters from $\{x_1,\ldots, x_n, a, b\}$. 
 On the other hand, for every such a word $w$ there is a reduced path labeled by $w$.
 
 Let $e_1$ and $e_2$ be edges in $\mathcal{G}$ with a common vertex $v$ which are either both directed towards $v$ or both directed away from $v$, and such that $\ell(e_1)=\ell(e_2)$. A \emph{Stallings folding} (briefly, a folding) of $\mathcal{G}$ is the identification of two such edges.  A labeling of $\mathcal{G}$ is \emph{reduced} if it does not admit any foldings. 
 
A \emph{(graphical) piece} in a reduced labeled graph $\mathcal{G}$ is a reduced labeled path which has at least two distinct immersions into $\mathcal{G}$.

The labeling satisfies the \emph{$Gr'(1/6)$--graphical small cancellation condition} whenever for all pieces $p$ in $\mathcal{G}$ we have 
  \[\vert \ell(p)\vert< \frac16 \min\{|\ell(c)|  \mid \text{ $c$  is a non-trivial cycle in  } \mathcal{G}\},\] where $| \cdot |$ denotes the usual word length metric on the free group on the free generating set $\{x_1,\ldots, x_n, a, b\}$.
   
   The labeling satisfies the \emph{$Gr_{\ast}'(1/6)$--graphical small cancellation condition} whenever for all pieces $p$ in $\mathcal{G}$ we have
  \[\vert \ell(p)\vert_{\ast}< \frac16 \min\{|\ell(c)|_{\ast}  \mid \text{ $c$ is a non-trivial cycle in  } \mathcal{G}\}.\]

 A \emph{generalized Rips-Segev graph} $\mathcal{RS}$, associated to given non-empty finite subsets $A$ and $B$ of elements in $\langle a\rangle * \langle b\rangle$,
is a connected finite reduced graph labeled by $\{a, b\},$ whose labeling satisfies the $Gr_*(1/6)$--graphical small cancellation condition, and whose cycles are labeled by words expressing the non unique product property of
 $AB$   in a group generated by $a,b$ subject to these relator words~\cite{markus}.  We explain the construction of such graphs on a concrete example, which we use then to prove our theorems.
 
  Let $c_i:=b^{i}a^{10^{J+10i-9}}b\cdots b a^{10^{J+10i-5}}$ for $i\in \{1,\ldots, K\}$ and some integers $K,J\geqslant 1$ to be specified below.  
  Let 
 $$A:=\bigsqcup_{i=1}^K\left\{c_i,c_ia,c_ia^2,\ldots,c_ia^{10^{J+10i-1}}\right\} \hbox{ and } B:=\{1,a,b,ab\}.$$ 
  We now produce a graph encoding the non unique product property of $AB.$
  
   We first choose a finite connected regular covering graph $\Theta$ of the (oriented) bouquet of 4 cycles labeled by
   the letters $y_1,y_2,y_3,y_4$ such that the length of non-trivial cycles in $\Theta$ is at least $19$. Such a covering graph does exist as the free group  (= the fundamental group of the bouquet) is residually finite. Indeed, for every ball $B(r)$  of radius $r$ in the Cayley graph of the free group there exists a finite index normal subgroup $N$ such that $B(r)\cap N$ is the identity. The covering graph corresponding to $N$ is a finite connected graph with vertex degree $8$ and girth at least $2r$. Taking $r\geqslant 10$ yields the required covering graph $\Theta.$  
   
   Let us enumerate the vertices of $\Theta$ by $1, \ldots, i, \ldots K$. As it is the covering graph of the bouquet labeled by $y_1,y_2,y_3,y_4$, for all $1\leqslant j \leqslant 4$ each vertex $i$ has an edge $y_{ij}:=( l_{ij},i)$ and an edge $z_{ij}:=( i,k_{ij})$  such that $z_{ij}=y_{k_{ij}j}.$ These are the ``in'' and ``out'' edges at vertex $i$ labeled by $y_j$ if we consider the labeling of $\Theta$ induced from the bouquet by the covering map.
   
   We define a new labeling $L$ of $\Theta$ as follows. For each $i$, we set 
    \begin{align*}
   L(y_{i1}):=&ba^{-10^{J+10i-4}}\\
   L(y_{i2}):=&a^{10^{J+10l_{i2}}}ba^{-10^{J+10i-3}}\\
   L(z_{i3}):=&a^{10^{J+10i-2}}ba^{-10^{J+10k_{i3}}}\\
   L(z_{i4}):=&a^{10^{J+10i-1}}b .
   \end{align*}
   The graph $\Theta$ coincides with the graph used by Rips-Segev in their original construction \cite{rips_torsion-free_1987}. 
   
   Now we subdivide and label edges of $\Theta$ according to $L$, so that an  edge bears a letter from $\{a, b\}$ and has the respective orientation.
   For instance, the edge $y_{i1}$ of $\Theta$ becomes a path of length $10^{J+10i-4}+1$ all of whose vertices but the endpoints are of degree 2 and whose label is $ba^{-10^{J+10i-4}}$. 
    The resulting labeled graph, denoted by $\Theta'$, is not reduced.  For our purposes, we further reduce  the graph (that is, make all possible foldings).
  Let us denote such a reduced graph by $\Theta''$. Note that certain  vertices of $\Theta''$ are identified with vertices $1, \ldots, i, \ldots, K$ of $\Theta$: the vertices of $\Theta$ we started with have not been  identified while producing $\Theta''$.  
   
 Observe that a non-zero exponent $P_i$ in words $$a^{Q}b^{\varepsilon_1}a^{P_i}b^{\varepsilon_2}a^{Q'} \hbox{ with }  \varepsilon_t=\pm 1,$$ read on reduced paths  in $\Theta''$ (hence the reduction of the label on a path $({l_{ij}},i),(i,{k_{is}})$ in $\Theta'$) is unique among the numbers   
\[
10^{J+10i-4}9,\ldots
,10^{J+10i-4}9999,10^{J+10i-3}9, \ldots,
10^{J+10i-1}9,10^{J+10i-4},\ldots, 10^{J+10i}, \] that is,  all $P_i$'s are pairwise distinct.

 We now use $\Theta''$ to produce a new graph. Namely,
for  each $i$ we glue a path $\gamma_i$ labeled by $c_i$ to $\Theta''$ so that
vertex $i$ of $\Theta''$ is identified with the endpoint of $\gamma_i$.
The starting points of all $\gamma_i$'s are all identified with a new vertex, denoted by $0$, and 
$\gamma_i$ has no vertices other than its endpoint in common with $\Theta''$.
Let us fold this new graph and denote the result  by $\mathcal{RS}$. 
We see, as above, that the exponents $P$ of $a$ in paths labeled by $b^{\varepsilon_1}a^Pb^{\varepsilon_2}$ are pairwise distinct.


 The reduced graph $\mathcal{RS}$ is our generalized Rips-Segev graph.  
In particular, the graphical pieces in $\mathcal{RS}$ consist of paths $a^{P}$, $a^{P}b^{\varepsilon}a^{P'},$ and $a^{P}b^{2\varepsilon}a^{P'}$, where $P$ and $P'$ are possibly zero. Therefore, the free product length of the pieces in $\mathcal{RS}$ is at most $3$. The free product length of cycles in $\mathcal{RS}$  is at least $19$. It follows that the labeling of $\mathcal{RS}$ satisfies the $Gr_*'(1/6)$--graphical small cancellation condition over the free product $\langle x_1,\ldots,x_m,a\rangle*\langle b\rangle$. 

If we choose $J$ such that $9(n+4m)<10^{J}$, then $P_i$ is not among the numbers $1,\ldots, 9(n+4m)$ for all $i$. 
Let $\mathcal{G} $ be the disjoint union of $\mathcal{RS}$ and of the $(n+4m)$ cycles labeled by words (1), (2), and (3) defined above. The graphical pieces in $\mathcal{G}$ are of length at most $3$ and cycles are 
labeled by words whose free product length is at least 19 in $\langle x_1,\ldots,x_m,a\rangle*\langle b\rangle$. Therefore, the labeling of $\mathcal{G}$ satisfies the $Gr_*'(1/6)$--graphical small cancellation condition over the free product $\langle x_1,\ldots,x_m,a\rangle*\langle b\rangle$.  

Let $G$ be a group generated by $x_1,\ldots, x_m, a, b,$  subject to relators defined by $\mathcal{G}$, 
$$G:= \langle x_1,\ldots, x_m, a, b \mid\hbox{labels on reduced cycles of $\mathcal{G}$}\rangle,$$ also denoted briefly $ G=\langle x_1,\ldots, x_m, a, b \mid \mathcal{G}\rangle$. The following general result ensures that $G$ is torsion-free and Gromov hyperbolic.

\begin{thm}\cite{markus}*{Th. 1}\label{T: markus}
 Let $G_1,\ldots,G_n$ be finitely generated groups.  Let $\mathcal{G}$ be a family of finite connected graphs edge-labeled by $G_1\cup \ldots \cup G_n$ so that the $Gr_*'(1/6)$--graphical small cancellation condition with respect to the free product length on the free product $G_1*\cdots *G_n$ is satisfied.
Let $G$ be the group given by the corresponding graphical presentation, that is,  the quotient of $G_1*\cdots *G_n$ subject to the relators being  the words read on the cycles of $\mathcal{G}.$ 

Then
$G$ satisfies a linear isoperimetric inequality with respect to the free product length. Moreover, $G$ is torsion-free whenever $G_1,\ldots,G_n$ are torsion-free; 
$G$ is Gromov hyperbolic whenever $G_1,\ldots,G_n$ are Gromov hyperbolic and $\mathcal{G}$ is finite.
\end{thm}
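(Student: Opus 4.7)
The plan is to adapt the classical Greendlinger/van Kampen diagram machinery of $C'(1/6)$ small cancellation theory over free products (see Lyndon--Schupp, Ch.~V, \S 9) to the graphical setting. The first step is to define a reduced van Kampen diagram $D$ over the presentation $F := G_1 \ast \cdots \ast G_n$ modulo the cycle labels of $\mathcal{G}$: each $1$-cell of $D$ carries a syllable (a non-trivial element of some $G_i$), each $2$-cell is labeled by a closed path in some graph of $\mathcal{G}$, and $D$ is reduced if no two adjacent faces lift to the same cycle of $\mathcal{G}$ along their common boundary arc. Under this convention, the boundary combinatorial length of a face equals the free product length of its cycle label. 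The crucial point is that a maximal arc shared by two distinct adjacent faces, or any arc on which a face folds onto itself, lifts to two distinct immersions into $\mathcal{G}$, hence its label is a graphical piece, and by the $Gr_\ast'(1/6)$ hypothesis its free product length is strictly less than $\tfrac{1}{6}$ of the shortest cycle length in $\mathcal{G}$.

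Next I would prove the graphical analogue of Greendlinger's lemma: any reduced disk diagram $D$ with at least one face admits an exterior face $\Pi$ whose intersection with $\partial D$ has free product length strictly greater than $\tfrac{1}{2}|\partial \Pi|_\ast$. The proof is the standard combinatorial curvature argument; the $Gr_\ast'(1/6)$ hypothesis forces that at most three arcs of $\partial \Pi$ can be shared with other faces, so the remaining arcs, which lie on $\partial D$, must account for strictly more than half of $|\partial \Pi|_\ast$. Throughout, all distances and ``pieces'' are measured with the free product length.

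From this Greendlinger property the three conclusions follow by the usual arguments. Iteratively peeling off such an exterior face reduces $|\partial D|_\ast$ by a definite fraction of $|\partial \Pi|_\ast$, yielding a linear isoperimetric inequality $\mathrm{Area}_\ast(w) \leqslant C |w|_\ast$ with $C$ depending only on the maximum cycle length in $\mathcal{G}$. Torsion-freeness reduces, via a reduced annular (or ``Greendlinger power'') diagram, to torsion in $F = G_1\ast \cdots \ast G_n$, which is entirely carried by the factors; hence if each $G_i$ is torsion-free, so is $G$. Finally, when $\mathcal{G}$ is finite and the $G_i$ are Gromov hyperbolic, the free product length on $F$ is quasi-isometric to the word length on $G$ (because each $G_i$ has a linear Dehn function), so the linear isoperimetric inequality with respect to $|\cdot|_\ast$ transfers to one with respect to $|\cdot|$, giving hyperbolicity of $G$.

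The main obstacle is carrying out the reduction arguments cleanly in this hybrid setting: one must simultaneously track $G_i$-cancellations (syllable reductions in $F$) and graph-theoretic reductions (Stallings foldings of $\mathcal{G}$), and verify that in a reduced diagram an internal arc between two faces genuinely bears the label of a graphical piece even after partial syllable collapse in the free product. Once this compatibility is in place, the linear isoperimetric inequality, torsion-freeness, and hyperbolicity follow by direct translation of their classical counterparts.
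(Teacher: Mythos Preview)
The paper does not contain a proof of this theorem; it is quoted verbatim from the external reference \cite{markus}*{Th.~1} and then used as a black box. There is therefore nothing in the present paper to compare your argument against, beyond the brief diagrammatic remarks in the proof of Theorem~\ref{T: up} (minimal diagrams, inner edges ``originating'' in $\mathcal{G}$, and the reference to \cite{markus}*{Th.~1.11}), which are consistent with the first two paragraphs of your sketch.

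Your overall strategy---reduced diagrams in which interior arcs are graphical pieces, a Greendlinger-type lemma in the free product length, and then the standard consequences---is the expected one and is almost certainly what the cited reference does. There is, however, a genuine error in your hyperbolicity step. The claim that ``the free product length on $F$ is quasi-isometric to the word length on $G$'' is false: already in $F=G_1\ast\cdots\ast G_n$ an element such as $g^k$ with $g\in G_1$ has $|g^k|_\ast=1$ for all $k$, while its word length is unbounded, so no quasi-isometry between $|\cdot|_\ast$ and $|\cdot|$ can exist. The linear isoperimetric inequality in $|\cdot|_\ast$ therefore does not transfer to one in $|\cdot|$ by a metric comparison. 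The correct argument is two-layered: the $|\cdot|_\ast$--linear inequality bounds the number of $\mathcal{G}$-cells in a reduced diagram by a constant times $|w|_\ast\leqslant |w|$; since $\mathcal{G}$ is finite these cells have uniformly bounded word-length boundary, and after excising them one is left with a diagram over $F$ whose boundary has word length linear in $|w|$; hyperbolicity of $F$ (which follows from hyperbolicity of the $G_i$, cf.\ \cite{pankratev_hyperbolic_1999}) then gives a linear bound on the remaining area. You should replace the quasi-isometry sentence with this two-step estimate.
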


Thus, it suffices to check that $G$ does not have the unique product property.  
This, together with Proposition \ref{rips}, will then imply  our main result, Theorem~\ref{main}.

\begin{thm}\label{T: up} The group $G$ does not satisfy the unique product property. 
Namely, the sets $A$ and $B$ embed and do not have the unique product property  in $G$.\end{thm}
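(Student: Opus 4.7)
The plan is to verify two separate claims: first, that $A$ and $B$ embed into $G$ as subsets; second, that every element of $AB\subset G$ admits at least two distinct representations as a product $a\cdot b$ with $a\in A$ and $b\in B$. Together these give non-uniqueness of representation for $A,B$ in $G$.

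For the embedding I would derive, from the $Gr_*'(1/6)$--graphical small cancellation condition on $\mathcal{G}$ and the linear isoperimetric inequality of Theorem~\ref{T: markus}, a Greendlinger-type statement: any freely reduced word which is trivial in $G$ but nontrivial in the free product $\langle x_1,\ldots,x_m,a\rangle*\langle b\rangle$ contains, as a syllable-subword, more than half of the label of some cycle of $\mathcal{G}$. Two distinct elements of $A$ (respectively of $B$) differ by a word of bounded syllable length whose reduced form cannot absorb that much of any such cycle: the choice $9(n+4m)<10^J$ separates the exponents of $a$ occurring in $A$ and in $AB$ from those occurring in the Rips relators (1)--(3), and the rapidly growing, mutually distinct exponents $10^{J+10i-s}$ prevent spurious overlaps between distinct cycles of $\mathcal{RS}$. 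Hence distinct elements of $A$ and of $B$ remain distinct in $G$.

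For non-uniqueness I would exploit the design of $\mathcal{RS}$. For each vertex $i$ of $\Theta$ and each index $j\in\{1,2,3,4\}$, the corresponding subdivided and $L$--relabelled edge of $\Theta$ gives a path $p$ in $\Theta''$ between two marked vertices; closing it by the spokes $\gamma_{l_{ij}}$ and $\gamma_i$ (respectively $\gamma_i$ and $\gamma_{k_{ij}}$) produces a cycle of $\mathcal{RS}$ whose label is a relator of $G$, for example $c_{l_{i1}}\cdot L(y_{i1})\cdot c_i^{-1}$ for $j=1$. Rearranging such a relator exhibits two distinct elements of $A\times B$ mapping to the same product in $G$; the case $j=1$ gives $c_i\cdot a^{10^{J+10i-4}}=c_{l_{i1}}\cdot b$, and analogously for $j=2,3,4$. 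A bookkeeping enumeration, essentially the one carried out in \cite{markus} for the original Rips--Segev graph, verifies that every pair in $A\times B$ participates in at least one such identity, so no element of $AB\subset G$ admits a unique representation.

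The main obstacle is the embedding step. In \cite{markus} the group is defined from $\mathcal{RS}$ alone; here we must combine $\mathcal{RS}$ with the Rips relators (1)--(3) and still rule out identifications among elements of $A$, $B$, or $AB$. The key technical input is that the exponents of $a$ appearing in (1)--(3) lie in $\{1,\ldots,9(n+4m)\}$, while those appearing in $A$, $B$ and in $\mathcal{RS}$ are of the form $10^{J+10i-s}$ with $9(n+4m)<10^J$; consequently no long syllable-subword of a word representing a difference of distinct elements of $A\cup B\cup AB$ can match more than half of any cycle of $\mathcal{G}$, and the Greendlinger-type lemma then yields the embedding.
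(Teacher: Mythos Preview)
Your treatment of the non-uniqueness of products in $AB$ and of the embedding of $B$ is correct and agrees with the paper. The gap is in the embedding of $A$.

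You claim that for distinct $a_1,a_2\in A$ the freely reduced word $w=a_1^{-1}a_2$ cannot contain, as a syllable-subword, more than half of any cycle of $\mathcal{G}$. The exponent separation via $J$ does rule out long overlaps with the Rips relators (1)--(3), but it does \emph{not} rule out long overlaps with cycles of $\mathcal{RS}$ itself. Each $a_i\in A$ is, by construction, the label of a reduced path in $\mathcal{RS}$ issuing from the basepoint $0$; hence the free reduction of $a_1^{-1}a_2$ labels a reduced path in $\mathcal{RS}$ from the endpoint of $a_1$ to that of $a_2$. Such a path can---and typically will---run along more than half of some cycle of $\mathcal{RS}$. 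The ``mutually distinct exponents'' argument only controls \emph{pieces}, i.e.\ labels admitting two \emph{distinct} immersions into $\mathcal{G}$; it says nothing about a single long immersion of a subword of $w$ into a cycle. So the Greendlinger conclusion is not contradicted, and your argument does not close.

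The paper handles exactly this obstacle by a diagrammatic argument that is genuinely different from a bare Greendlinger application. One chooses a reduced path $p$ in $\mathcal{RS}$ between the two endpoints, takes a minimal van Kampen diagram $D$ for $x=\ell(p)=_G 1$, and then minimises again over the choice of $p$ so that no face of $D$ \emph{originates with} $\partial D$ (its exterior boundary does not lift to $\widetilde{\mathcal{G}}$ compatibly with the lift of $p$). After this double minimisation one caps $D$ with a single face $D'$ along a cycle of $\mathcal{RS}$ containing $p$; every inner segment of the resulting diagram $D''$ lying on $\partial D$ is now a genuine graphical piece, so $D''$ satisfies $C_*'(1/6)$ yet has only one exterior face, contradicting the fact that such diagrams must have at least two. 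The ``originating face'' step is precisely what is missing from your proposal; without it one cannot exclude that the Greendlinger face is compatible with the immersion of $w$ into $\mathcal{RS}$, and then removing it just replaces $p$ by another path in $\mathcal{RS}$ rather than yielding a contradiction.
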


\begin{proof}
 The set $R_G$ of relators of $G$ is the set of all words read on the reduced cycles of $\mathcal{G}$. By definition, $R_G$ is the disjoint union of the above defined set $R$ 
 and the words read on the cycles of $\mathcal{RS}$. The latter words encode the non unique product property  of finite sets  $A$ and $B$ of cyclically reduced words in the letters 
 $a^{\pm 1}$ and $b^{\pm 1}$ (in the group $\langle a, b \mid \mathcal{RS}\rangle$ defined by $\mathcal{RS}$). Denote by $A'$ and $B'$ the image of $A$ and $B$ in $G$.  Let us show that the maps $\iota_A:A\to A'$ and $\iota_B:B\to B'$ are injective. By the construction of $\mathcal{RS}$ it will follow that  $A$ and $B$ do not have the unique product property in $G$.

 We use the language of van Kampen diagrams over the free product~\cite{lyndon_combinatorial_1977}.
 
 Firstly, let us fill each of the reduced cycles in $\mathcal{G}$ with a disc and denote  by $\widetilde{\mathcal{G}}$ the resulting 2-complex. Let $D$ be a van Kampen diagram over $R_G$. Let $\Pi$ be a  face of $D$ with a boundary word $r\in R_G$. The face $\Pi$ is a copy of the 2-cell in $\widetilde{\mathcal{G}}$ with the same boundary word $r$. We say, $\Pi$ \emph{lifts} to $\widetilde{\mathcal{G}}$. The edges in the boundary of $\Pi$ lift to $\widetilde{\mathcal{G}}$ with $\Pi$. The $Gr_*'(1/6)$--graphical small cancellation condition immediately implies that such a lift of $\Pi$ to $\widetilde{\mathcal{G}}$ is unique. An edge common to faces $\Pi_1$ and $\Pi_2$ in $D$ \emph{originates} in $\mathcal{G}$ if its lifts to $\widetilde{\mathcal{G}}$ with $\Pi_1$ and with $\Pi_2$ coincide.
  An edge in $D$ is called \emph{exterior} if it is contained in the boundary of at most one face, otherwise it is called \textit{inner}. A face which has an exterior edge in its boundary is an  \emph{exterior face}. The collection of exterior edges is the \emph{boundary} $\partial D$ of $D$. An \emph{arc} is a path of whose vertices only the starting and terminal vertex can have vertex degree $>2$.  An \emph{inner segment} is an arc of  inner edges which are all common to  the same pair of faces. 

 A priori, a van Kampen diagram over $R_G$ does not satisfy the $C'(1/6)$--small cancellation condition over the free product as the inner segments whose edges originate in $\mathcal{G}$ do not correspond to graphical pieces. 
 To remedy this, we assume that $D$ is \emph{minimal}, that is, it has the minimal number of faces among all van Kampen diagrams over $R_G$ with the given boundary label.
 Then such a minimal $D$  does satisfy the $C'(1/6)$--small cancellation condition over the free product as it has no inner edges that originate in $\mathcal{G}$.  The van Kampen diagrams with more than one face which satisfy the $C'(1/6)$--small cancellation condition over the free product have at least two exterior faces (cf.~\cite{markus}*{Th. 1.11}). The free product length of the boundary of such a van Kampen diagram is at least the free product length of the longest boundary of its faces.
 
For each $\overline{a}\in A$ the generalized Rips-Segev graph $\mathcal{RS}$ contains a reduced path labeled by the word  $\overline{a}$, whose  starting vertex is $0$ and whose 
 terminal vertex is uniquely determined by the label $\overline{a}$ as $\mathcal{RS}$ is reduced.  
 Suppose that $a_1\not=a_2 \in A$ and $a_1=_Ga_2,$ where the equality is in $G$. Let $p$ be a reduced path  in $\mathcal{RS}$ connecting the vertices $a_1$ and  $a_2$ (that is, connecting the endpoints
 of two paths starting at vertex $0$ and labeled by $a_1$ and $a_2$, respectively). Let $x$ be the label 
 of $p$. As $x=_G1$ by assumption, there is a non-empty minimal van Kampen diagram $D$ over $R_G$ with  boundary label $x$. Then $D$ satisfies the $C'(1/6)$--small cancellation condition 
 over $\langle x_1,\ldots,x_m,a\rangle*\langle b\rangle$.  
 We choose such a path $p$ so that the number of faces of $D$ is minimal among the numbers of faces of all possible minimal van Kampen diagrams over $R_G$ with  boundary label
 corresponding to all possible reduced paths connecting $a_1$ and $a_2$ in $\mathcal{G}$. 
  
    A face $\Pi$ in such a $D$ is \emph{originating with $\partial D$} if the lift of the exterior boundary of $\Pi$ to $\widetilde{\mathcal{G}}$ with $\Pi$ coincides with the (obvious) lift of 
    the exterior boundary of $\Pi$ to $\widetilde{\mathcal{G}}$ with $\partial D$ (which is the lift of $p$ from $\mathcal{G}$ to $\widetilde{\mathcal{G}}$ induced by the trivial inclusion $\mathcal{G}\subseteq \widetilde{\mathcal{G}}$). We claim that $D$ has no face originating with $\partial D$. Indeed, otherwise we can remove such a face from $D$. Namely, we delete the interior of such a face together with the interior of its exterior boundary which lifts to $\widetilde{\mathcal{G}}$ with $\partial D$. Then we delete all faces which do not belong to the component containing the vertex representing  $a_1$ and $a_2,$ respectively. 
    The resulting van Kampen diagram has a fewer number of faces and
    the lift of its boundary to $\widetilde{\mathcal{G}}$ is a path which connects $a_1$ and $a_2$. This contradicts the minimality condition in the choice of $p$.
  
 The path $p$ lies on a cycle $c=pp'$ in $\mathcal{RS}$. Let $x'$ be the label of $p'$ and $w=xx'$ be the label of $c$, that is, $w=_G1$.
 Let $D'$ be the minimal van Kampen diagram over $R_G$ with  boundary label $w$ which globally (that is, with all of its faces) lifts to $\widetilde{\mathcal{G}}$. Obviously, $D'$ consists of the only face whose boundary label is $w$. Let us glue $D'$ to $D$ along $\partial D$ labeled by $x$. We denote the resulting diagram by $D''$.
 Then $D'$ is the only exterior face of $D''$. We have just seen that $D$ has no faces originating with $\partial D$. Therefore, the inner segments of $D''$ which lie on $\partial D$ are all graphical pieces. Hence, $D''$ satisfies the $C'(1/6)$--small cancellation condition over $\langle x_1,\ldots,x_m,a\rangle*\langle b\rangle$. This leads to a contradiction as $D''$ has to contain at least two exterior faces. Thus, there is no distinct elements $a_1,a_2\in A$ such that $a_1=_Ga_2$ in $G$.
 We conclude that $\iota_A:A\to A'$ is injective.

Now $B=\{1,a,b,ab\}$. If $b_1\not=b_2 \in B$ and $b_1=_Gb_2$, then $|b_1b_2^{-1}|_*<3$. This is a contradiction as the graphical small cancellation condition over the free product $\langle x_1,\ldots,x_m,a\rangle*\langle b\rangle$  implies that the free product length of relations in $G$ has to be at least $19$. Therefore, $\iota_B:B\to B'$ is injective as well. Thus,
$G$ does not satisfy the unique product property.
\end{proof}

The groups $G$ we have just constructed are the only known examples of torsion-free Gromov hyperbolic groups without the unique product property.
It is yet unknown whether these torsion-free groups without the unique product property do satisfy the Kaplansky zero-divisor conjecture.

The proof of Theorem~\ref{main2} is straightforward by the arguments above applied to an infinite generalized Rips-Segev graph, that is,  to an infinite disjoint union of finite Rips-Segev graphs. 
Several explicit constructions of such infinite families of finite graphs can be found in~\cite{markus}.  
A finite subunion of such a family yields a torsion-free Gromov hyperbolic group without the unique product property as above,  
whence the direct limit of such groups for the resulting group $G$ in Theorem~\ref{main2}.
%
\section{Examples of torsion-free groups without unique product}\label{sec:algo}

We now vary the quotient group $Q$ in our new short exact sequence above, Theorem~\ref{main}. This is to provide a wide diversity of Gromov hyperbolic torsion-free groups $G$ without the unique product property which satisfy many unusual algebraic, geometric, and algorithmic properties. 

Recall that the generalized word problem, also known as the membership problem in a group $G$, asks for an algorithm to decide whether a given word in the generators of $G$ represents an element 
of a given finitely generated subgroup of $G$. The following statements are immediate generalizations of \cite{rips_subgroups_1982}.

\begin{prop}
The generalized word problem is unsolvable in the class of torsion-free Gromov hyperbolic groups without the unique product property.
\end{prop}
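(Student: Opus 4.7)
The plan is to mimic Rips's original argument from \cite{rips_subgroups_1982}, but feeding it with our enhanced short exact sequence from Theorem~\ref{main} rather than the classical one from Proposition~\ref{rips}. The point is that Theorem~\ref{main} preserves exactly the feature of the Rips construction that makes the membership problem behave badly: the kernel $N$ is a finitely generated subgroup of a hyperbolic group whose membership test would solve the word problem of the quotient $Q$.

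Concretely, I would start by picking $Q$ to be a finitely presented group with unsolvable word problem (for example, the Novikov--Boone group, or any Higman embedding of a finitely generated recursively presented group with unsolvable word problem). Applying Theorem~\ref{main} to this particular $Q$ produces a short exact sequence
\[
1 \longrightarrow N \hookrightarrow G \twoheadrightarrow Q \longrightarrow 1
\]
where $G$ is torsion-free, Gromov hyperbolic, without the unique product property, and $N$ is the $2$-generated subgroup of $G$ generated by the two letters $a,b$ from the construction of Section~\ref{sec:graphical}. In particular $N$ is explicitly finitely generated, and we know generators for it.

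Next I would reduce the word problem in $Q$ to the membership problem for $N$ in $G$. Given a word $w$ in the generators $x_1,\ldots,x_m$ of $Q,$ lift it to the same word in the corresponding generators of $G$ and ask whether this element lies in $N$. Since $N$ is exactly the kernel of the epimorphism $G \twoheadrightarrow Q$ (by construction, $N$ is the normal subgroup generated by $a,b$, and the generators $x_i$ of $G$ map onto those of $Q$ while $a,b$ map to $1$), we have $w \in_G N$ if and only if $w =_Q 1$. Consequently, a hypothetical algorithm solving the membership problem for the fixed finitely generated subgroup $N \leqslant G$ would, by composition with the trivial preprocessing $w \mapsto w$, yield an algorithm for the word problem in $Q$. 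This contradicts the choice of $Q$.

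Finally I would check that no step adds algorithmic subtleties: $G$ admits a finite presentation obtained from the finite graph $\mathcal{G}$ of Section~\ref{sec:graphical}, the generating set of $N$ (namely $\{a,b\}$) is explicit, and the reduction from words over the $x_i$'s to words over the generators of $G$ is trivial. I do not expect any genuine obstacle here: once Theorem~\ref{main} is in hand, the argument is essentially the same as in \cite{rips_subgroups_1982}, and the only mild care needed is to note that the extra geometric input (absence of the unique product property) is carried along for free, because the construction of $G$ in Theorem~\ref{main} supplies a group that is simultaneously hyperbolic, torsion-free, non unique product, and fits into the Rips-type extension.
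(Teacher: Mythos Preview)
Your proposal is correct and follows essentially the same approach as the paper's own proof: choose a finitely presented $Q$ with unsolvable word problem, apply Theorem~\ref{main}, and observe that membership in $N\leqslant G$ is equivalent to the word problem in $Q$ via $w\in_G N \iff w=_Q 1$. The paper's version is terser (it cites \cite{baumslag_unsolvable_1994} for the reduction rather than spelling it out), but the content is the same.
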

\begin{proof} Let $Q$ be a finitely presented group with unsolvable word problem. Explicit finite presentations of such groups can be found in  \cites{boone_word_59,borisov_simple_1969, collins_simple_1986}. As pointed out by \cite{baumslag_unsolvable_1994}, the membership problem for $N$ in $G$ (a new group, produced by our Theorem~\ref{main}) is equivalent to the word problem in $Q$. 
Indeed, a word $w$ in the generators of $G$ represents an element of $N$ if and only if $w=_Q1$, where the equality is in $Q$. 
\end{proof}

\begin{prop} There exists a torsion-free Gromov hyperbolic group $G$ without the unique product property such that
\begin{itemize}
\item there are finitely generated subgroups $P_1,P_2$ of $G$ such that $P_1\cap P_2$ is not finitely generated;
\item there is a finitely generated but not finitely presented subgroup of $G$;
\item for any $r\geqslant 3$ there is an infinite strictly increasing sequence of $r$--generated subgroups of $G$.
\end{itemize}
\end{prop}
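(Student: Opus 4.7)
The plan is to apply Theorem~\ref{main} three times, each time with a carefully chosen finitely presented group $Q$, transferring a subgroup pathology from $Q$ to the middle group $G$ via the short exact sequence $1\to N\hookrightarrow G\xrightarrow{\pi} Q\to 1$. The underlying mechanism is uniform: since $N$ is $2$-generated and is contained in $\pi^{-1}(Q')$ for every $Q'\leqslant Q$, a $k$-generated subgroup $Q'\leqslant Q$ pulls back to a $(k+2)$-generated subgroup of $G$, and $\pi^{-1}(Q_1)\cap\pi^{-1}(Q_2)=\pi^{-1}(Q_1\cap Q_2)$ because $N\subseteq \pi^{-1}(Q_i)$. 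Thus each of the three items reduces to exhibiting a suitable $Q$ and chasing the exact sequence.

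For the first item I would take $Q=F_2\times F_2$ together with two finitely generated subgroups $Q_1,Q_2$ whose intersection is not finitely generated; this is a classical phenomenon in $F_2\times F_2$. Setting $P_i:=\pi^{-1}(Q_i)$ yields two finitely generated subgroups of $G$ with $P_1\cap P_2=\pi^{-1}(Q_1\cap Q_2)$. If $P_1\cap P_2$ were finitely generated, then its image $Q_1\cap Q_2$ in $Q$ would be finitely generated as well, a contradiction.

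For the second item I would take a finitely presented $Q$ containing a finitely generated subgroup $H$ that is not finitely presented (for instance once more inside $F_2\times F_2$, or via classical Baumslag--Remeslennikov-type constructions). The preimage $\pi^{-1}(H)$ is finitely generated, and the standard fact that the quotient of a finitely presented group by a finitely normally generated subgroup is finitely presented, applied to $1\to N\to \pi^{-1}(H)\to H\to 1$, prevents $\pi^{-1}(H)$ from being finitely presented.

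For the third item I would take $Q=\mathrm{BS}(1,2)=\langle a,t\mid tat^{-1}=a^2\rangle$, which is finitely presented and torsion-free. Setting $b_k:=t^{-k}at^k$, the defining relation gives $b_k=b_{k+1}^2$, so $\langle b_0\rangle\subsetneq\langle b_1\rangle\subsetneq\cdots$ is an infinite strictly ascending chain of cyclic subgroups of $Q$; their preimages under $\pi$ form a strictly ascending chain in $G$ (strictness being immediate by projecting back via $\pi$), and each is generated by the two generators of $N$ together with a single lift of $b_k$, hence is $3$-generated and in particular $r$-generated for every $r\geqslant 3$. None of the three steps appears genuinely hard; the main care is with the subgroup-lattice correspondence through $\pi$, which is the routine fact that $\pi^{-1}\pi=\mathrm{id}$ on the lattice of subgroups of $G$ containing $N$.
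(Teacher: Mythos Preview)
Your approach is essentially the same as the paper's, which gives no detailed proof and simply says the statements are ``immediate generalizations of \cite{rips_subgroups_1982}'' obtained ``by choosing a group $Q$ with the required property'' and pulling back through the short exact sequence; you carry this out with explicit choices of $Q$ and explicit pullback arguments, which is exactly the intended method.

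One small point: the proposition as stated asserts the existence of a \emph{single} group $G$ enjoying all three properties, whereas you apply Theorem~\ref{main} three separate times and hence produce three different groups. This is easily repaired: take $Q$ to be a single finitely presented group containing all the needed subgroup configurations, for instance $Q=(F_2\times F_2)\ast\mathrm{BS}(1,2)$, and run your three pullback arguments inside the one resulting $G$. All of your verifications (the lattice correspondence $\pi^{-1}(Q_1)\cap\pi^{-1}(Q_2)=\pi^{-1}(Q_1\cap Q_2)$, the finite presentability argument via $N$ being finitely normally generated, and the strictness of the lifted chain) go through unchanged.
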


More algorithmic properties in the context of Rips construction are studied by \cite{baumslag_unsolvable_1994}. Applied to our situation they yield the following.
\begin{prop}\label{P: alg prop} There exists a torsion-free Gromov hyperbolic group $G$ without the unique product property such that there is no algorithm to determine
\begin{itemize}
\item  the rank of $G$;
\item  whether an arbitrary finitely generated subgroup of $G$ has finite index;
\item whether an arbitrary finitely generated subgroup of $G$ is normal;
\item  whether an arbitrary finitely generated subgroup of $G$ is finitely presented;
\item  whether an arbitrary finitely generated subgroup $S$ of $G$ has a finitely generated second integral homology group $H_2(S,\mathbb{Z})$.
\end{itemize}
\end{prop}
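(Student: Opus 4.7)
The plan is to apply Theorem~\ref{main} to a carefully chosen finitely presented group $Q$ and then to repeat the Baumslag--Miller--Short reductions from~\cite{baumslag_unsolvable_1994} essentially verbatim. Starting from $Q$ and the short exact sequence
\[
 1 \to N \to G \xrightarrow{\pi} Q \to 1
\]
with $N = \langle a, b\rangle$ normal and $2$-generated, the crucial observation is that for any word $w$ in the generators of $G$ the subgroup $H_w := \langle a, b, w\rangle$ contains $N$ and hence coincides with the full preimage $\pi^{-1}(\langle \pi(w)\rangle)$; consequently $H_w/N \cong \langle \pi(w)\rangle$. An analogous statement holds for subgroups of the form $\langle a, b, w_1, \dots, w_k\rangle$.

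From this identification the five undecidability statements are obtained by standard reductions. The subgroup $H_w$ has finite index in $G$ (respectively, is normal in $G$, respectively, equals $G$) if and only if $\langle \pi(w)\rangle$ has the corresponding property in $Q$. Finite presentability of $H_w$ and finite generation of $H_2(H_w,\mathbb{Z})$ are controlled, via Reidemeister--Schreier rewriting and the Lyndon--Hochschild--Serre spectral sequence applied to $1 \to N \to H_w \to \langle \pi(w)\rangle \to 1$, by analogous properties of the cyclic subgroup $\langle \pi(w)\rangle \leqslant Q$. Finally, the rank of $\langle a, b, w_1, \ldots, w_k\rangle \leqslant G$ encodes the rank of $\langle \pi(w_1), \ldots, \pi(w_k)\rangle \leqslant Q$ up to an additive error of at most $2$. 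Choosing $Q$ to be a single finitely presented group in which all of these decision problems about finitely generated subgroups are simultaneously unsolvable---for instance, a suitable free product of the examples constructed in~\cite{baumslag_unsolvable_1994}---yields a single group $G$ for which all five listed problems are algorithmically undecidable.

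The main point to verify is that these reductions make use only of the structural features of the short exact sequence---$Q$ finitely presented, $N = \langle a, b\rangle$ normal and $2$-generated---that our Theorem~\ref{main} already provides. Since these are precisely the features of the original Rips sequence, the reductions of~\cite{baumslag_unsolvable_1994} transfer without modification. The additional graphical relators of Section~\ref{sec:graphical} influence the behaviour of $N$ (it now fails the unique product property) but do not alter the structure of the sequence, so no genuine technical obstacle is expected; the only step requiring care is the homological one, where one must check that the Reidemeister--Schreier presentation of $H_w$ and the resulting control on $H_2$ depend only on the presentation of $Q$ and on $\pi(w)$, which is precisely the content of the corresponding argument in~\cite{baumslag_unsolvable_1994}.
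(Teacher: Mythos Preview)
Your proposal is correct and follows essentially the same approach as the paper: choose a suitable finitely presented $Q$ (as in \cite{baumslag_unsolvable_1994}) and pull the undecidability back along the short exact sequence of Theorem~\ref{main}, using only the structural features $N=\langle a,b\rangle\lhd G$ and $G/N\cong Q$. The paper's own argument is even more terse---it simply cites \cite{baumslag_unsolvable_1994} and remarks that the required properties pull back from $Q$ to $H$ and then to $G$---while you spell out the shape of the BMS reductions (the subgroups $H_w=\pi^{-1}(\langle\pi(w)\rangle)$ and the spectral-sequence step for $H_2$), but the strategy is identical.
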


The proofs are by choosing a group $Q$ with the required property, which then allows to pullback the property to the group $H$ and then (immediately, for the above algorithmic properties) to $G$. The presentations of $Q$ and, hence, of $H$ and of $G$ can be given explicitly as in the previous section. 
 %
 \section{(T)-Rips-construction without unique product}
 
 Our generalization of the Rips construction allows to combine the original, now classical, arguments with further geometric properties by adding suitable new relators to the presentation of $H.$ 
 In our Theorem~\ref{main}, these new relators encode the non unique product property and are given by the generalized Rips-Segev graph. 
Another famous property that can be encoded by graphs is Kazhdan's Property (T). For instance, Gromov's spectral characterization of Property (T) 
  allows him to produce finitely presented groups with Kazhdan's Property (T) given by the graphical $Gr'(1/6)$--small cancellation presentations~\cites{gromov_random_2003, silberman_random_2003}. Mixing the original Rips construction and the above Gromov result, Ollivier and Wise~\cite{ollivier_kazhdan_2007} obtain a short exact sequence
  $1\to N \hookrightarrow G \twoheadrightarrow Q \to 1$, where $G$ is a torsion-free group defined by 
 a finite graphical $Gr'(1/6)$--small cancellation presentation and $N$ has Kazhdan's Property (T). We extend their result as follows.

 \begin{thm}\label{T: T-rips} Given a finitely presented group $Q$, there is a short exact sequence \[1\to N \hookrightarrow G \twoheadrightarrow Q \to 1\] such that
 \begin{itemize}
 \item $G$ is a torsion-free non-elementary Gromov hyperbolic group,
 \item $N$ has Kazhdan's Property (T) and does not satisfy the unique product property.
 \end{itemize}
 \end{thm}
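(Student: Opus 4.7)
The plan is to extend the construction of Theorem~\ref{main} by enlarging the generating set of the normal kernel from two letters $a, b$ to $M$ letters $a_1, \ldots, a_M$, where $M>64$ is the constant from Theorem~\ref{thm:both}. This lets us fit three types of relators into a single graphical small cancellation presentation: (i) Rips-type relators imitating (1), (2), (3) of Section~2, of the form $r_i \cdot W_i$ and $x_j^{\pm 1} a_k x_j^{\mp 1} \cdot V^{\pm}_{jk}$, where $W_i, V^{\pm}_{jk}$ are long disambiguating words in the letters $a_\ell$ built from disjoint exponent ranges; (ii) the generalized Rips-Segev graph $\mathcal{RS}$ from Section~\ref{sec:graphical}, built over $\langle a_1\rangle * \langle a_2\rangle$ (with $a_1, a_2$ in the role of $a, b$), so that the sets $A, B \subset \langle a_1, a_2\rangle$ encode the failure of unique product; (iii) the graph $\mathcal{T}$ from Theorem~\ref{thm:both}, whose $M$ letters we identify with $a_1, \ldots, a_M$ and whose cycles encode Property~(T).

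Let $\mathcal{G} := \mathcal{RS} \sqcup \mathcal{T} \sqcup \{\text{cycles for the relators in (i)}\}$. Choosing the $a_1$-exponents occurring in $\mathcal{RS}$ and in the tails $W_i, V^{\pm}_{jk}$ from pairwise disjoint numerical blocks, each far exceeding any edge-label length appearing in $\mathcal{T}$, one checks that every graphical piece in $\mathcal{G}$ has free product length at most $3$ while every cycle has free product length at least $19$; the $Gr_*'(1/6)$--condition internal to $\mathcal{T}$, which is part of the content of Theorem~\ref{thm:both}, handles the intra-$\mathcal{T}$ pieces. Set
\[
G := \langle x_1, \ldots, x_m, a_1, \ldots, a_M \mid \mathcal{G}\rangle.
\]
Theorem~\ref{T: markus} then gives that $G$ is torsion-free and Gromov hyperbolic. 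Non-elementarity is automatic: $G$ fails unique product (shown below), so $G$ is not cyclic, and a torsion-free hyperbolic group that is not cyclic is non-elementary.

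The short exact sequence follows as in Proposition~\ref{rips}: the relators in (i) force $N := \langle a_1, \ldots, a_M\rangle$ to be normal in $G$ and identify $G/N$ with $Q$. The failure of unique product inside $N$ follows from the minimal van Kampen diagram argument of Theorem~\ref{T: up}, applied verbatim to the larger free product $\langle x_1, \ldots, x_m\rangle * \langle a_1\rangle * \cdots * \langle a_M\rangle$: the maps $A \to G$ and $B \to G$ are injective, and distinct representations of a product element in $AB$ persist in $G$, hence inside $N \le G$. For Property~(T) of $N$, observe that the group
\[
N_0 := \langle a_1, \ldots, a_M \mid \text{labels on cycles of } \mathcal{T}\rangle
\]
has Property~(T) by Theorem~\ref{thm:both}, and that $a_i \mapsto a_i$ defines a surjection $N_0 \twoheadrightarrow N$, well-defined because every relator of $N_0$ is among the defining relators of $G$ by construction. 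Since Property~(T) passes to quotients, $N$ inherits it.

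The main obstacle is the bookkeeping in the second step: one must choose the exponents in the Rips tails $W_i, V^{\pm}_{jk}$ and in the cycles of $\mathcal{RS}$ from pairwise disjoint numerical ranges, all larger than any exponent or edge-label length appearing in $\mathcal{T}$, so that any reduced path common to relators of two different families reads only a short pattern of the form $a_i^\alpha a_j^\varepsilon a_k^\beta$. This keeps cross-family pieces of free product length at most $3$ and prevents the $Gr_*'(1/6)$--condition on the union from being violated. Once these numerical disjointness inequalities are recorded, the remainder is a direct combination of Theorems~\ref{T: markus}, \ref{T: up}, and~\ref{thm:both}.
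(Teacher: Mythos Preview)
Your proposal is correct and follows essentially the same approach as the paper's own proof: both build $G$ from the disjoint union of a Property~(T) graph $\mathcal{T}$, a Rips--Segev graph $\mathcal{RS}$, and a collection $\mathcal{R}$ of Rips-type conjugation cycles, verify the $Gr_*'(1/6)$ condition on the union via exponent disjointness, and then invoke Theorem~\ref{T: markus}, the diagram argument of Theorem~\ref{T: up}, and the fact that Property~(T) passes to quotients. The only cosmetic differences are that the paper uses $36$ kernel generators $a_1,\ldots,a_{35},b$ (appealing to Theorem~\ref{T: ap1} with $m>35$ rather than Theorem~\ref{thm:both} with $m>64$), builds $\mathcal{RS}$ over $\langle a_1\rangle*\langle b\rangle$ rather than $\langle a_1\rangle*\langle a_2\rangle$, and groups $a_1$ with the $x_j$'s in the free-product decomposition; none of this affects the argument.
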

\begin{proof}
Let $\langle x_1,\ldots x_m\mid r_1,\ldots, r_n\rangle$ be a presentation of $Q$.
 Let $a_1,\ldots,a_{35},b$ be distinct and different from  each of $x_1,\ldots, x_m$. Let $\mathcal{T}$ be the finite graph provided by our Theorem~\ref{T: ap1} below, with a labeling by $\{a_1, \ldots, a_{35},b\}$ such that the group $\langle a_1,\ldots,a_{35},b \mid  \mathcal{T} \rangle$ defined by this graph satisfies Property (T) and the labeling of $\mathcal{T}$ satisfies the $Gr_*'(1/6)$--graphical small cancellation condition. 
  Let $M$ exceed the largest exponent of $a_1$ in the labeling of $\mathcal{T}$.  We
take the following new explicit Rips relators, see our version of the Rips construction in Section 2. 
\begin{align}
 \label{TRel1}   & r_ia_1^{10i-9+M} b a_1^{10i-8+M}b\cdots ba_1^{10i+M}b \text{ for all } 1\leqslant i \leqslant n,
 \end{align}
 \begin{align}
   \label{TRel2}
   & x_j^{-1}a_kx_j^{}a_1^{10(n+(k-1)m+j)-9+M}ba_1^{10(n+(k-1)m+j)-8+M}b\cdots \\
  &\qquad \qquad \qquad \qquad \cdots ba_1^{10(n+(k-1)m+j)+M}b \text{, for all } 1\leqslant j \leqslant m,1\leqslant k\leqslant35, \nonumber\\
   & x_ja_kx_j^{-1}a_1^{10(n+(k+34)m+j)-9+M}ba_1^{10(n+(k+34)m+j)-8+M}b\cdots \nonumber\\
    & \qquad \qquad \qquad \quad \quad  \cdots ba_1^{10(n+(k+34)m+j)+M}b \text{, for all } 1\leqslant j \leqslant m,1\leqslant k\leqslant35,  \nonumber 
    \end{align}
    \begin{align}
   \label{TRel3} 
   & x_j^{-1}bx_ja_1^{10(71m+n+j)-9+M}ba_1^{10(71m+n+j)-8+M}b \cdots ba_1^{10(71m+n+j)+M}b
   \\& \qquad \qquad \qquad \qquad \qquad \qquad \qquad \qquad \qquad \qquad \quad  \text{ for all } 1\leqslant j \leqslant m, \nonumber \\
   &  x_jbx_j^{-1}a_1^{10(72m+n+j)-9+M}ba_1^{10(72m+n+j)-8+M}b\cdots ba_1^{10(72m+n+j)+M}b\nonumber \\
   &\qquad \qquad \qquad \qquad \qquad \qquad \qquad \qquad \qquad \qquad \quad \text{ for all } 1\leqslant j \leqslant m.\nonumber
  \end{align} 
 
 Let $\mathcal{R}$ be the disjoint union of $72m+n$ cycles, each labeled by one of these Rips relators.  
 Let $M':=10(72m+n)+M$. Take a generalized Rips-Segev graph $\mathcal{RS}$ for $a:=a_1$ and $b$,  where $J$ is chosen such that $M'<10^{J}$. 

Our new group $G$ is defined by the following presentation, $$G:=\langle x_1,\ldots,x_m,a_1,\ldots,a_{35},b\mid  \mathcal{T} \sqcup \mathcal{R} \sqcup \mathcal{RS}\rangle,$$ that is, the relators of $G$ are the labels of the reduced cycles of $\mathcal{T} \sqcup \mathcal{R} \sqcup \mathcal{RS}$. Let $N$ be the subgroup of $G$ generated by $a_1,\ldots,a_{35},b$. This subgroup $N$ is normal in $G$ by our Rips relators \eqref{TRel2} and \eqref{TRel3} read on $\mathcal{R}$. The map $G\to Q$, defined by $x_i\mapsto x_i$ and $a_i,b\mapsto 1$, is an epimorphism. The kernel of this map is generated by $a_1,\ldots ,a_{35},b$ and therefore coincides with $N$. 
  
 The labeling of $\mathcal{T} \sqcup \mathcal{R} \sqcup \mathcal{RS}$ satisfies the $Gr'_*(\frac16)$--small cancellation condition over $\langle x_1,\ldots,x_m, a_1\rangle*\langle a_2\rangle* \cdots * \langle a_{35}\rangle * \langle b\rangle,$ that is, the graphical small cancellation conditions with respect to the free product length $\vert \cdot \vert_*$  in  $\langle x_1,\ldots,x_m, a_1\rangle*\langle a_2\rangle* \cdots * \langle a_{35}\rangle * \langle b\rangle.$  Indeed, the reduced non-trivial cycles in $\mathcal{T}$, $\mathcal{R}$, and $\mathcal{RS}$ have free product length at least $20$. The immersed subpaths common in $\mathcal{T}$, $\mathcal{R}$ and $\mathcal{RS}$ are of free product length at most $3$, by our choice of the $a_1$-exponents. 

 Theorem \ref{T: markus} implies that $G$ is torsion-free and Gromov hyperbolic, and the proof of Theorem \ref{T: up}, applied to the graph $\mathcal{T} \sqcup \mathcal{R} \sqcup \mathcal{RS}$, 
 shows that $G$ is without the unique product property. 
 
 As a subgroup, the group $N$ is injected in $G$. Therefore, given two words $w_1$ and $w_2$ in $a_1,\ldots,a_{35},b$ such that $w_1\not =_{N} w_2$ in $N$, we have that $w_1\not =_{G} w_2$ in $G$. This implies that 
\[
N=\langle a_1,\ldots,a_{35},b\mid \mathcal{T}\sqcup \mathcal{RS}\sqcup \hbox{ relations in $G$ in letters $a_1^{\pm1},\ldots,a_{35}^{\pm1},b^{\pm1}$}\rangle.
  \]
 Thus, $N$ is  without the unique product property. Indeed, the sets $A$ and $B$ defining $\mathcal{RS}$ are contained in $N$ and the relations read on $\mathcal{RS}$ imply that $N$ does not have the unique product property for $A$ and $B$. 
 The group $N$ is  a quotient of the group $\langle a_1,\ldots, a_{35},b\mid \mathcal{T}\rangle$. This group has Property (T)  by Theorem \ref{T: ap1}.  Thus, $N$ has Kazhdan's Property (T) as well.
\end{proof}

Observe that our use of the free product language above simplifies the arguments of \cite{ollivier_kazhdan_2007}*{Prop. 2.2} and the proof of \cite{ollivier_kazhdan_2007}*{Th. 1.1}  which justify that relators in the Rips construction can be added to the relators defined by the graph $\mathcal{T}$.

\appendix

\section{Small cancellation labellings and Property (T)}\label{sec:ap}

Given a graph $\mathcal{G}$ labeled by $\{a_1,\ldots,a_m\}$, we denote by $G(\mathcal{G})$ the group defined by $\langle a_1,\ldots,a_m\mid \mathcal{G}\rangle.$ 
Our aim is to prove the following result.
   
 \begin{them}\label{T: ap1} For all $m> 35$, there exists a  finite  connected graph $\mathcal{T}$ labeled by $\{a_1,\ldots,a_m\}$ such that the labeling satisfies the free product $Gr_*'(\frac16)$--small cancellation condition over $\langle a_1 \rangle * \ldots * \langle a_m \rangle$ and such that $G(\mathcal{G})$ has Property (T).
 \end{them}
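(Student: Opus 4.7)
The plan is to produce $\mathcal{T}$ by the classical probabilistic recipe of Gromov: start from a suitable expander, assign i.i.d.\ uniformly random signed labels to its edges, and show that the three required properties — reducedness, graphical small cancellation over the free product, and Property (T) for the associated group — all hold simultaneously with positive probability.

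First I would fix a finite connected $k$-regular Ramanujan graph $\Theta$ so that the normalized Laplacian spectral gap $\lambda(\Theta)$ exceeds $1/2$ and the girth $g$ is arbitrarily large; this is the step that will drive Property (T). I would then label each edge of $\Theta$ independently and uniformly at random with an element of $\{a_1^{\pm1},\ldots,a_m^{\pm1}\}$, so that each edge receives one of $2m$ equally likely signed labels.

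Next I would verify the graphical small cancellation condition over the free product. The probability that two edges incident to a common vertex fold is $1/(2m)$, and a union bound makes the expected number of such folds $O(|V(\Theta)|k^{2}/m)$, so for $m$ large compared to $k$ the labeling is reduced with positive probability. For the syllable-length estimate, consecutive edges along any reduced path start a new free-product syllable independently with probability $1-1/(2m)$; a Chernoff bound then yields that every reduced path of word length $L$ has free-product length at least $\alpha L$ with $\alpha = 1-1/m-\varepsilon$, except on an event of probability $e^{-cL}$. Summing over the at most $|V(\Theta)|(k-1)^{L}$ reduced paths of length up to $g$ shows that, with positive probability, every non-trivial cycle in $\mathcal{T}$ has free-product length at least $\alpha g$. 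For pieces, two specified reduced paths of word length $L$ carry the same label with probability $(2m)^{-L}$; summing over at most $|V(\Theta)|^{2} k^{2L}$ ordered pairs gives a total mass $|V(\Theta)|^{2}\bigl(k^{2}/(2m)\bigr)^{L}$, which is $<1$ for $L\geqslant L_{0}=C\log|V(\Theta)|$, with $C$ depending on $m$ and $k$. Choosing $g$ large enough that $6L_{0}<\alpha g$ delivers the $Gr_*'(1/6)$ condition over $\langle a_1\rangle*\cdots*\langle a_m\rangle$.

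Finally, Property (T) of $G(\mathcal{T})$ will follow from the \.Zuk/Silberman/Ollivier spectral criterion applied to the presentation $2$-complex built from $\mathcal{T}$: the vertex links inherit the spectral gap of $\Theta$, and $\lambda(\Theta)>1/2$ forces Property (T) of the associated group, as in \cite{silberman_random_2003}, \cite{gromov_random_2003}, and \cite{ollivier_kazhdan_2007}. The hard part will be the simultaneous calibration of the parameters $(k,m,\lambda,g,|V(\Theta)|)$: the Ramanujan requirement $\lambda>1/2$ prevents $k$ from being too small; $m$ must then be large enough to keep $\alpha$ and the small-cancellation margin strictly positive, while the number of vertices $|V(\Theta)|$ must remain within the tolerance dictated by the union bound and the exponentially small Chernoff failure probability. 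The explicit threshold $m>35$ in the statement is precisely what this balance demands, and obtaining it cleanly is the technical crux of the argument.
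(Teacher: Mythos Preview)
Your overall strategy---random labeling of an expander, union bounds for small cancellation, Silberman's result for Property~(T)---is the paper's, but two of your steps do not go through, and the repair in both cases is a device you omit: the $j$-fold \emph{edge subdivision} of the expander.

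The reducedness step fails outright. With $m$ and $k$ fixed and $|V(\Theta)|\to\infty$, the expected number of foldable edge-pairs is of order $|V(\Theta)|$, so the probability that the random labeling is already reduced tends to $0$, and the union-bound reasoning you sketch cannot make the intersection with the other two events positive. The paper never asks the raw labeling to be reduced: it passes to $\fold(\widetilde{\Theta^{j}})$ and proves (Lemma~\ref{L: fpqi}) that the folding map is a $\bigl(\tfrac{1}{2(1-\eta(m))},\beta gj,gj\bigr)$--local quasi-isometric embedding from edge length to free-product length with probability $\to 1$. That lemma is not a Chernoff bound on syllable increments but a Cauchy--Schwarz/Markov estimate (Lemma~\ref{L: uniformly chosen random word}) bounding $P(|W_\ell|_*\leqslant L)$ via the return probability of the random walk on $F_m$; this is where the gross cogrowth $\eta(m)$ enters. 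The piece-versus-girth balance likewise fails without subdivision: in any bounded-degree expander $\g(\Theta)\asymp\log|V(\Theta)|\asymp L_0$, so $6L_0<\alpha g$ becomes a competition of constants that forces $m$ to exceed a large polynomial in $k$, nowhere near $35$. The paper instead labels the $j$-subdivision $\Theta^{j}$: the girth becomes $j\,\g(\Theta)$, yet the number of immersed paths of length $\leqslant j\,\g(\Theta)$ is still only $\approx (jd)^{2}d^{\diam(\Theta)+\g(\Theta)}$, since such paths project to paths in $\Theta$. Taking $j$ large decouples the exponents, and the sole residual condition (Proposition~\ref{P: fix beta}) is $2^{1/2}\bigl(\sqrt{2}\,(2m)^{\eta(m)}/(2m)\bigr)^{1/3}<1$, which by Kesten's $(2m)^{\eta(m)}=2\sqrt{2m-1}$ reads $m^{2}-32m+16>0$, i.e.\ $m>35$. (A minor point: ``vertex links inherit the spectral gap of $\Theta$'' is not how Silberman's argument runs---the spectral transfer to the group is mediated by the random labeling and needs the large-girth hypothesis---so cite his estimate as a black box rather than invoke \.{Z}uk's link criterion.)
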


 This generalizes the following result of Gromov \cite{gromov_random_2003}*{1.2.A, 4.8.(3)}, see also \cite{silberman_random_2003} and \cite{ollivier_kazhdan_2007}.
 
  \begin{them}[\cite{ollivier_kazhdan_2007}*{Prop. 7.1}]\label{T: ap2}
If $m\geqslant 2$, there exists a  finite  connected graph $\mathcal{T}$ labeled by $\{a_1,\ldots,a_m\}$ such that the labeling satisfies the $Gr'(\frac16)$--small cancellation condition with respect to the word length on the free group on  $\{a_1,\ldots,a_m\}$ and the group $G(\mathcal{G})$ has Property (T).
 \end{them}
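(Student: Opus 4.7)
The plan is to follow the Gromov--Silberman strategy, as crystallised by Ollivier--Wise, and obtain $\mathcal{T}$ as a random edge-labeling of a carefully chosen expander graph with large girth. Fix $m\geqslant 2$ and choose a sequence $(\mathcal{G}_n)_{n\in\mathbb{N}}$ of $d$-regular finite connected graphs such that the girth $g_n\to\infty$ and the normalised Laplacian $\Delta(\mathcal{G}_n)$ has spectral gap uniformly bounded below by some $\lambda_0>\tfrac12$. Such families exist by combining a Ramanujan/Margulis construction with the standard spectral-amplification trick of replacing $\mathcal{G}_n$ by an iterated graph power, which boosts the second eigenvalue past $\tfrac12$ while preserving girth growth. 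Then label each edge of $\mathcal{G}_n$ independently and uniformly at random by a pair (letter, orientation) in $\{a_1^{\pm1},\ldots,a_m^{\pm1}\}$; whenever two edges at a vertex happen to bear the same oriented label, delete one of them. For $d$ small relative to $m$ this cleanup is sparse and affects neither the girth asymptotics nor the spectral gap.

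For the $Gr'(1/6)$ condition, the key is a probabilistic piece-length estimate. A piece of length $L$ is a reduced labeled path admitting two distinct immersions; for any fixed pair of reduced paths of length $L$ in $\mathcal{G}_n$ the probability that they carry the same random label is at most $(2m)^{-L}$. Union-bounding over the at most $|V(\mathcal{G}_n)|^2 d^{2L}$ such pairs shows that, with probability tending to $1$ as $n\to\infty$, every piece in the labeling has length at most $C\log |V(\mathcal{G}_n)|$ for a constant $C=C(d,m)$. Because the Ramanujan/Margulis construction can be arranged so that $g_n\geqslant\alpha\log|V(\mathcal{G}_n)|$ with $\alpha$ tunable, we can force (max piece length)/girth $<1/6$, which is exactly the graphical $Gr'(1/6)$ condition with respect to the word length on the free group $F(a_1,\ldots,a_m)$.

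For Property~(T), the plan is to apply the Gromov--\.Zuk--Silberman spectral criterion in the form used by Ollivier--Wise: if the normalised Laplacian on the labeled graph has spectral gap exceeding $\tfrac12$, then $G(\mathcal{G})$ is a Kazhdan group. Since the random labeling preserves the combinatorial structure of $\mathcal{G}_n$, the relevant spectral gap is essentially that of $\mathcal{G}_n$ itself, which by construction exceeds $\tfrac12$; a straightforward concentration/monotonicity argument confirms the gap survives the labeling step with high probability. For $n$ sufficiently large, the events "the labeling is reduced", "$Gr'(1/6)$ holds" and "the spectral gap exceeds $\tfrac12$" all have probability close to $1$, so their intersection is nonempty. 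Any realisation in this intersection provides the required graph $\mathcal{T}$.

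The principal obstacle is crossing the spectral threshold $\tfrac12$ needed by the Kazhdan criterion, since generic expanders have spectral gap well below it; the amplification step (graph powers or analogous operations) must be carried out in a way that keeps the girth large enough to absorb the logarithmic piece-length bound and keeps the edge set labelable by only $m$ letters. A secondary technicality is making the random labeling honestly reduced (so that no Stallings folding collapses cycles below the computed girth), which is handled by tuning $d$ against $m$ and performing the sparse cleanup described above without damaging either the small-cancellation or the spectral estimate.
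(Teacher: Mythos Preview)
Your high-level strategy (random labeling of a growing-girth expander) matches the paper, but the Property~(T) step contains a genuine gap and a key structural device from the paper's argument is missing.

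For Property~(T) you invoke a criterion of the form ``spectral gap of the labeled graph exceeds $\tfrac12$ $\Rightarrow$ Kazhdan'', which conflates \.Zuk's link criterion with the result actually used here. \.Zuk's criterion concerns the \emph{link} in the presentation complex, not the labeling graph $\mathcal{G}$ itself; there is no statement saying that $\lambda(\mathcal{G})>\tfrac12$ for the expander forces $G(\mathcal{G})$ to have~(T). The Silberman estimate the paper extracts from \cite{silberman_random_2003}*{Cor.~2.19} needs only a uniform lower bound $\lambda_0>0$ on the spectral gap, together with sufficiently large girth, and yields Property~(T) with high probability via a random-walk transfer argument---no threshold $\tfrac12$ is required. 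Your spectral-amplification step is therefore unnecessary and in fact harmful: iterated graph powers (adding edges between vertices at distance $\leqslant k$) shrink the girth by a factor $\sim k$, directly undermining the small-cancellation side. The assertion that this operation ``preserves girth growth'' is false for the standard notion of graph power.

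The paper also does not label the expander directly but its $j$-subdivision $\mathcal{G}^j$, choosing $j$ large in terms of $m$, the degree bound $d$, and the diameter/girth constant $C$ of the Selberg family. This $j$ is precisely what makes the path-counting factor $d^{\,2(C+1)\g(\mathcal{G})}$ lose to $(2m)^{-\Theta(j\,\g(\mathcal{G}))}$ in the bounds for $P_{qi}$ and $P_{sc}$; without it the concluding inequalities do not hold. Your union bound faces the same competition between $|V|^2 d^{2L}$ and $(2m)^{-L}$ and needs an analogous device. Finally, the paper handles non-reducedness by Stallings folding and proves the folding map is a local quasi-isometric embedding (Lemma~\ref{lqi}), rather than by deleting edges; edge deletion can perturb both the spectrum and the girth in ways your sketch does not control.
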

 
 \label{S: explanation T-construction}
 
Our proof of Theorem \ref{T: ap1}  proceeds as the proof of Theorem \ref{T: ap2} of \cite{ollivier_kazhdan_2007}*{Sec. 7} up to appropriate technical adjustments. 
Moreover, we show that the graph $\mathcal{T}$ satisfies the conclusions of both Theorem \ref{T: ap1} and Theorem \ref{T: ap2}:
 \begin{them}\label{T: ap3} For all $m> 64$, there exists a  finite  connected graph $\mathcal{T}$ labeled by $\{a_1,\ldots,a_m\}$ such that the labeling satisfies the free product $Gr_*'(1/6)$--small cancellation condition over $\langle a_1 \rangle * \ldots * \langle a_m \rangle$, the labeling satisfies the $Gr'(1/6)$--small cancellation condition with respect to the word length metric, and  the group $G(\mathcal{T})$ has Property (T).
 \end{them}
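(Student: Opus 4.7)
The plan is to follow the probabilistic argument of Ollivier--Wise that establishes Theorem~\ref{T: ap2}, sharpening the counting estimates so as to obtain simultaneously the classical small cancellation condition $Gr'(1/6)$ and the free-product condition $Gr_*'(1/6)$. We start with the same combinatorial object: a finite connected $D$-regular graph $\Theta$ of large girth $g$ whose normalized adjacency operator has second-largest eigenvalue bounded away from $1$ by a constant independent of $|\Theta|$; such $\Theta$ may be taken, as in \cite{ollivier_kazhdan_2007}, from a family of Cayley graphs of finite quotients of a non-elementary Property~(T) group. We assign to each edge of $\Theta$, independently and uniformly at random, a letter from $\{a_1,\ldots,a_m\}$ and an orientation, and then apply any necessary Stallings foldings (which, by the large girth of $\Theta$, affect only a negligible proportion of the edges) to produce the reduced labeled graph $\mathcal{T}$.

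The classical condition $Gr'(1/6)$---in fact $Gr'(1/k)$ for any fixed $k$, at the cost of taking $g$ large enough---is established exactly as in the proof of Theorem~\ref{T: ap2}: two reduced paths of word length $\ell$ in $\Theta$ bear identical labels with probability at most $(2m)^{-\ell}$, and there are at most $|\Theta|^2 D^{2\ell}$ such pairs, so choosing $g$ sufficiently large relative to $\log|\Theta|$ and applying a union bound over $\ell\geq g/k$ yields, with high probability, $|p|<g/k$ for every graphical piece $p$. The new ingredient is a matching lower bound on $g_*:=\min\{|\ell(c)|_* : c \text{ is a non-trivial cycle in } \mathcal{T}\}$. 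For a fixed cycle $C$ in $\Theta$ of word length $n$, the probability that its label has free product length exactly $j$ equals
\[
\binom{n-1}{j-1}(2m)(2m-1)^{j-1}(2m)^{-n},
\]
since such a labeling is determined by placing $j-1$ syllable breaks among the $n-1$ interior positions and assigning a letter and an orientation to each syllable, subject to adjacent syllables carrying distinct letters. Summing this bound over $j\leq n/2$ yields, up to a polynomial factor in $n$, a bound of $(2/\sqrt{2m})^n$ per cycle; a union bound over the at most $|\Theta| D^n$ cycles of length $n\geq g$ then converges provided $2D/\sqrt{2m}<1$, i.e., $m>2D^2$. For the degree $D$ arising in the expander family selected in \cite{ollivier_kazhdan_2007}, the condition $m>64$ suffices, so, with positive probability, every non-trivial cycle $c$ in $\mathcal{T}$ satisfies $|c|_*\geq |c|/2$.

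Combining the two estimates with $k=12$, every graphical piece $p$ and every non-trivial cycle $c$ in $\mathcal{T}$ satisfy
\[
|p|_*\leq|p|<\tfrac{1}{12}|c|\leq\tfrac{1}{6}|c|_*,
\]
so both $Gr'(1/6)$ and $Gr_*'(1/6)$ hold simultaneously with positive probability. Property~(T) of $G(\mathcal{T})$ then follows from the Gromov--Silberman--\.Zuk spectral criterion applied to $\mathcal{T}$: with positive probability the random labeling preserves the spectral gap inherited from $\Theta$, which remains above the threshold for Property~(T). The main obstacle is precisely the simultaneous union bound of the second paragraph: once the alphabet is large enough that identical-label pieces become exponentially rare---an easy condition---the dominant failure mode shifts to cycles having few syllables, and balancing the exponential count of cycles against the probability of each such failure forces the explicit threshold $m>64$. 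A secondary technical point is that Stallings foldings should not destroy the spectral gap of $\mathcal{T}$ nor introduce new short cycles, but this is controlled by the girth assumption on $\Theta$ just as in the Ollivier--Wise argument.
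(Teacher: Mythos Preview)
Your outline has the right overall shape, but it omits the mechanism that makes the union bounds close, and your key probability estimate points in the wrong direction.

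The paper (following Ollivier--Wise) does not label the expander $\Theta$ directly: it first passes to the $j$-subdivision $\Theta^j$ and labels \emph{that}. This is not cosmetic. For any expander family one has $\log|\Theta|\asymp \g(\Theta)$ (Moore's bound from below, the hypothesis $\diam\leqslant C\g$ from above), so your phrase ``choosing $g$ sufficiently large relative to $\log|\Theta|$'' describes an impossible regime. After subdivision, the number of immersed paths of length at most $j\g(\Theta)$ is still controlled by $d^{\diam(\Theta)+\g(\Theta)}\leqslant d^{(C+1)\g(\Theta)}$, while the probabilistic gain is now exponential in $j\g(\Theta)$; taking $j$ large (depending on $d$ and $C$) is what makes every union bound converge. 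This is exactly how the threshold $m>64$ becomes independent of the graph family: in the paper it arises from the inequality $2^{1/2}\bigl(\sqrt{2}\,(2m)^{\eta(m)}/(2m)\bigr)^{1/4}<1$ with $\beta=1/4$, which via Kesten's identity $(2m)^{\eta(m)}=2\sqrt{2m-1}$ reduces to a condition on $m$ alone. Your condition $m>2D^2$ depends on the degree of the expander, and no specific $D$ in \cite{ollivier_kazhdan_2007} produces the value~$64$; without subdivision, the honest bound on $m$ is exponential in $C$.

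Separately, your formula $\binom{n-1}{j-1}(2m)(2m-1)^{j-1}(2m)^{-n}$ is the probability that a uniformly random length-$n$ word has exactly $j$ maximal runs of the same \emph{oriented} letter; this is not $P(|W_n|_*=j)$. Since adjacent runs $a_i^{+},a_i^{-}$ lie in the same factor $\langle a_i\rangle$ and may cancel, one always has $|W_n|_*\leqslant(\text{number of such runs})$, so summing your formula over $j\leqslant n/2$ yields only a \emph{lower} bound for $P(|W_n|_*\leqslant n/2)$, whereas the union bound needs an upper bound. The paper obtains the correct upper bound (Lemma~\ref{L: uniformly chosen random word}) by a Cauchy--Schwarz argument pairing $(2m-1)^{-|x|_*/2}$ against the return probability of the simple random walk on $F_m$, and then feeds this into local quasi-isometry and small-cancellation lemmas (Lemmas~\ref{L: fpqi} and~\ref{L: scfp}) that parallel the word-length case. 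Your chaining idea $|p|_*\leqslant|p|<\tfrac1{12}|c|\leqslant\tfrac16|c|_*$ is a legitimate alternative way to combine the two conditions, but it still requires both a correct upper bound on $P(|W_n|_*\leqslant n/2)$ and the subdivision to make the cycle count manageable.
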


It is not surprising that $\mathcal{T}$ satisfies the conclusions of both Theorem \ref{T: ap1} and Theorem \ref{T: ap2}, for a large enough $m$. The intuition is that the free product length in $\langle a_1\rangle*\ldots *\langle a_m\rangle$ approximates the word length in the free group on $\{a_1,\ldots,a_m\}$ as $m\to \infty$. 
Indeed, the minimal cycle length in the free product length bounds the length of the minimal cycles in the word length from below. Pieces  are words of finite length chosen 
uniformly at random. Let us evaluate the probability that the word length and the free product length of such a random word in  letters $a_1^{\pm 1}, \ldots, a_m^{\pm 1}$ coincide. Such a word is of word length equal to $n$ if it is $a_{i_1}^{P_1}a_{i_2}^{P_2}\ldots a_{i_j}^{P_{j}}$ with all coefficients $P_i\not=0$, $a_{i_j}\not=a_{i_{j+1}}$, and $\sum_{i=1}^{j}P_i=n$. Its free product length equals to $n$ if, in addition, all exponents $P_i=\pm1$. The probability that all $P_i=\pm1$ in such a word is given by $\left(\frac{2m-2}{2m}\right)^{n-1},$ which tends to $1$ exponentially as $m\to \infty$.

We provide an explicit value $m=64$, for which the approximation of the word length by the free product length is sufficient to conclude Theorem \ref{T: ap3}.

 \subsection{Ollivier-Wise's proof of Theorem \ref{T: ap2}}
 In this subsection, first we explain Ollivier-Wise's proof of Theorem \ref{T: ap2}.
 Then we extend it to our general free product setting. 
 
Given an expander graph, we endow it with a labeling chosen uniformly at random and extract  from \cite{silberman_random_2003} and \cite{ollivier_kazhdan_2007} explicit bounds on the probability that the group defined by such a labeled graph has Property (T) and the corresponding presentation satisfies the graphical small cancellation condition. 
We put an emphasis on the combination of the estimates on the occurring probabilities.
 
 Let $\mathcal{G}$ be a finite connected graph with vertex set $V(\mathcal{G})$ and a  set of undirected edges $E(\mathcal{G})$.  We denote by
 $\lambda(\mathcal{G})$ the spectral gap of $\mathcal{G}$.  The girth, denoted by $\g(\mathcal{G})$, is the minimal number of edges in a shortest non-trivial cycle of $\mathcal{G}$.

  A labeling $\ell$ of $\mathcal{G}$ by $\{x_1,\ldots, x_n,a,b\}\times \{\pm1\}$ assigns to every edge a letter $x_i$, $a$, or $b$, and an orientation. 
  We keep the notation $\mathcal{G}$ for the resulting directed graph labeled by $\{x_1,\ldots, x_n, a, b\}$.
 We say  $\mathcal{G}$ is \emph{reduced}, whenever $\mathcal{G}$ and its folding coincide.
     
  Given $m>1$, we denote by $\widetilde{\mathcal{G}}$ the graph $\mathcal{G}$ labeled uniformly at random by $\{a_1,\ldots, a_m\}\times \{\pm1\}$.  We denote the corresponding folded labeled directed graph by $\fold(\mathcal{\widetilde{G}}).$
  
  The \emph{$j$-subdivision} $\mathcal{G}^j$ of $\mathcal{G}$ is the graph $\mathcal{G}$ with every edge replaced by $j$ edges.
   Consequently, $G\left(\widetilde{\mathcal{G}^j}\right)$ denotes the group defined by the $j$-subdivision of $\mathcal{G}$ labeled uniformly at random.
  
   The probability that $G\left(\widetilde{\mathcal{G}^j}\right)$ has Property (T) is denoted by $P_T$. The probability that the map \emph{folding}$\colon\widetilde{\mathcal{G}^j}\to \fold\left(\widetilde{\mathcal{G}^j}\right)$ is a local quasi-isometric embedding is denoted by $P_{qi}$, and  the \emph{conditional} probability that the labeling of $\fold\left(\widetilde{\mathcal{G}^j}\right)$ satisfies the $Gr'(\alpha)$--small cancellation condition with respect to the word length metric, under the condition that the folding is a local quasi-isometric embedding, is  denoted by $P_{sc}$. Thus, the probability that the labeling of $\fold\left(\widetilde{\mathcal{G}^j}\right)$ satisfies the $Gr'(\alpha)$--small cancellation condition is at least $P_{qi}P_{sc}$.
   
 We extract explicit lower bounds for $P_T$, $P_{qi}$ and $P_{sc}$ from~\cites{silberman_random_2003,ollivier_kazhdan_2007}. This  allows to estimate the probability that the labeling of $\fold\left(\widetilde{\mathcal{G}^j}\right)$ satisfies the $Gr'(\alpha)$--small cancellation condition and $G\left(\widetilde{\mathcal{G}^j}\right)$ has Property (T). This probability is  at least $P_T+P_{qi}P_{sc}-1$. For certain infinite families of graphs $(\mathcal{S}_i)_i$, we  then show that the probability that $(\mathcal{S}_i^j)_i$ satisfies these properties converges to $1$ as $i\to \infty$. This provides the existence of graphs that define groups with Property (T) and whose labeling satisfies the graphical small cancellation condition.

  Let $S_l$ be the number of words of length $l$ in the letters $a_1^{\pm 1},\ldots,a_m^{\pm 1}$ that reduce to the identity in the free group on free generators $a_1,\ldots, a_m$. The \emph{gross cogrowth} of the free group  is defined by \[\eta(m):=\lim_{l\to \infty}\frac{\log_{2m}(S_{2l})}{2l}.\] The limit exists as $S_{l+l'}\geqslant S_l S_{l'}$ and, hence, $\log_{2m}(S_{2l})$ is superadditive. 
  
 The spectral radius of the simple random walk on the free group of $m$ generators equals to $(2m)^{\eta(m)-1}$. By a result of Kesten \cite{kesten_symmetric_1959}*{Th. 3}, \[(2m)^{{\eta(m)}}=2\sqrt{2m-1}.\]  The gross cogrowth satisfies  $1/2<\eta(m)<1$ and $\eta(m)\to 1/2$ as $m\to \infty$. See e.g. \cite{ollivier_sharp_2004}*{Sec. 1.2} for basic properties of the gross cogrowth.
   
We extract from \cite{silberman_random_2003}*{Cor. 2.19 p. 164} the following estimate on $P_T$. We denote by $\lambda(G)$ the smallest non-zero eigenvalue of the graph laplacian $\Delta$, while in \cite{silberman_random_2003} $\lambda(G)$ denotes the maximal eigenvalue of $1-\Delta$, cf.  \cite{ollivier_kazhdan_2007}*{comment to Prop. 7.3} and \cite{silberman_random_2003}*{Def. in Lem.2.11 p. 154 \& p.151}. (We denote the number of generators by $m$ instead of $k$ used by \cite{silberman_random_2003}.)
  \begin{pr}
 For all $m\geqslant 2$, $d\geqslant3$, $\lambda_0 >0$, and $j\in \mathbb{N}$, there exists a number 
\[g_0=g_0(m,\lambda_0,j)\]
such that if the graph $\mathcal{G}$ satisfies \begin{enumerate}
                                     \item $\g(\mathcal{G})\geqslant g_0$,
                                     \item $\lambda(G)\geqslant\lambda_0$ for all $i$,
                                     \item $3\leqslant \deg(v) \leqslant d$ for all $v\in V(\mathcal{G})$,
                                    \end{enumerate}
then $G(\widetilde{\mathcal{G}^j})$  has Property (T) with probability  
\[P_T\geqslant1-a(m,d,\lambda_0, j)e^{-b(m,d,\lambda_0,j)|V(\mathcal{G})|},\] where $a$ and $b$ are positive numbers which do not depend on $\g(\mathcal{G})$ or $|V(\mathcal{G})|$.
\end{pr}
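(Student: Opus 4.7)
The plan is to deduce the estimate from Gromov's spectral criterion for Kazhdan's Property~(T) together with a concentration-of-measure argument, essentially following Silberman's strategy in~\cite{silberman_random_2003}. First I would recall the spectral criterion: if in the Cayley $2$-complex of a presentation every vertex link carries a weighted graph whose normalized Laplacian has first non-trivial eigenvalue strictly greater than $1/2$, then the group has Property~(T). When the presentation is read off a random labeling of the $j$-subdivision $\mathcal{G}^j$, the link at the trivial vertex is naturally identified with a weighted graph on $\{a_1^{\pm 1},\ldots,a_m^{\pm 1}\}$ whose weights are encoded by the $2j$-step random walk statistics on $\mathcal{G}^j$ together with the random labeling.

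Next I would establish a deterministic spectral comparison. Under the girth assumption $\g(\mathcal{G})\geqslant g_0$, a ball of radius $2j$ in $\mathcal{G}^j$ is a tree, so the length-$2j$ simple random walk on $\mathcal{G}^j$ looks locally like the one on a $2m$-regular tree; combined with the expansion hypothesis $\lambda(\mathcal{G})\geqslant \lambda_0$ and Kesten's identity $(2m)^{\eta(m)}=2\sqrt{2m-1}$, this controls the \emph{expected} spectrum of the random link operator. Choosing $g_0=g_0(m,\lambda_0,j)$ sufficiently large forces the expected spectral gap of the link to exceed $1/2$ uniformly in the size of $\mathcal{G}$.

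The probabilistic step is a bounded-differences argument. Exposing the labels of $\mathcal{G}$ edge by edge defines a Doob martingale for the largest non-trivial eigenvalue of the random link operator; because the degree of every vertex is bounded by $d$, each label change perturbs this operator by a bounded amount in operator norm. Azuma--Hoeffding then yields an exponentially small probability, in the number of edges and hence (by the degree bound) in $|V(\mathcal{G})|$, that the spectral gap drops below $1/2$. This produces constants $a=a(m,d,\lambda_0,j)>0$ and $b=b(m,d,\lambda_0,j)>0$ and the claimed bound $P_T\geqslant 1-a e^{-b|V(\mathcal{G})|}$.

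The main obstacle will be to keep the constants $a$ and $b$ independent of $\g(\mathcal{G})$ and $|V(\mathcal{G})|$. The girth threshold $g_0$ is driven by the rate of convergence in the spectral comparison and depends on $m,\lambda_0,j$, but it must be fixed once and for all \emph{before} invoking concentration, so that Azuma--Hoeffding sees a deterministic target for the link eigenvalue. Managing this order of quantifiers carefully---first fixing $g_0$ to secure the expectation bound, then applying concentration to control deviations around it---is the technical heart of the argument, and the resulting estimate is exactly Corollary~2.19 of~\cite{silberman_random_2003}.
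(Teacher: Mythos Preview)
The paper does not supply its own proof of this proposition: it is stated as an extraction from \cite{silberman_random_2003}*{Cor.~2.19} (see the sentence immediately preceding the statement, together with the remark reconciling the two conventions for $\lambda(\mathcal{G})$), and no argument is given. Your sketch is a faithful outline of Silberman's proof---\v{Z}uk--Gromov spectral criterion for the link, a deterministic spectral comparison once the girth is large enough, and an Azuma--Hoeffding bounded-differences concentration step---and you correctly identify the result as Corollary~2.19 of \cite{silberman_random_2003}, so you are in line with what the paper invokes.
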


We express the probability $P_T$ in terms of $\g(\mathcal{G})\leqslant |V(\mathcal{G})|:$  \[P_T\geqslant 1-a(m,d,\lambda_0, j)e^{-b(m,d,\lambda_0,j)\g(\mathcal{G})}.\]
The following proposition allows to compare the edge length of an immersed path $p$ in $\widetilde{\mathcal{G}^j}$ with the word length of the labeling of $p$. 

A \emph{$(c_1,c_2,c_3)$--local quasi-isometric embedding}  between metric spaces $(X,d_X)$ and $(Y,d_Y)$ is a map $f:X\to Y$ such that, whenever $d_X(a_1,x_2)\leqslant c_3$, we have 
\[c_1^{-1}d_X(a_1,x_2)-c_2\leqslant d_Y(f(a_1),f(x_2)) \leqslant c_1d_X(a_1,x_2)+c_2.\]

We use the proof of \cite{ollivier_kazhdan_2007}*{Prop. 7.8} to obtain the following.
\begin{lemma}\label{lqi} For all $m\geqslant 2$, $\beta>0$, $d\in \mathbb{N}$, $j\geqslant 1$, if $\deg(v) \leqslant d$ for all $v\in V(\mathcal{G})$,     
then the folding $\widetilde{\mathcal{G}^j}\to \fold\left(\widetilde{\mathcal{G}^j}\right)$ is a 
$\left(\frac{\eta(m)}{1-{\eta(m)}}, \beta j\g(\mathcal{G}),\g(\mathcal{G})j\right)$--local quasi-isometric embedding 
 with probability
\[P_{qi}\geqslant 1-j^2d^{\diam(\mathcal{G})+\g(\mathcal{G})}(2m)^{-(1-{\eta(m)})\beta \g(\mathcal{G})j}.\]
 \end{lemma}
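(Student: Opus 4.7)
The plan is to adapt Ollivier--Wise's argument \cite{ollivier_kazhdan_2007}*{Prop. 7.8} to the $j$-subdivided setting. Since the folding map $\pi\colon \widetilde{\mathcal{G}^j} \to \fold(\widetilde{\mathcal{G}^j})$ is $1$-Lipschitz, the upper quasi-isometry inequality is automatic, and all content lies in the lower bound.

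The key observation is that whenever $p$ is a reduced path of edge length $n$ in $\widetilde{\mathcal{G}^j}$, its label $\ell(p)$ is a uniformly random word of length $n$ in $\{a_1^{\pm 1},\ldots,a_m^{\pm 1}\}$: the $j$-subdivision guarantees that consecutive edges of $p$ are distinct edges of $\mathcal{G}^j$ and hence labeled independently. Moreover, $d_{\fold(\widetilde{\mathcal{G}^j})}(\pi(x_1),\pi(x_2))$ equals the length of the free-group reduction of $\ell(p)$, since the folded graph carries a reduced labeling. Hence the lower quasi-isometry inequality reduces to the cogrowth-type statement that, with high probability, the free-group reduction of a uniformly random length-$n$ word has length at least $\frac{1-\eta(m)}{\eta(m)} n - \beta j \g(\mathcal{G})$.

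For this probability estimate I would combine Kesten's identity $(2m)^{\eta(m)} = 2\sqrt{2m-1}$ with the bound $S_l \leq C (2m)^{\eta(m) l}$, using that a length-$n$ word reducing to a fixed element of length $k$, when concatenated with the inverse of that element, yields a length-$(n+k)$ word reducing to the identity. Careful book-keeping then yields a single-path tail bound of order $(2m)^{-(1-\eta(m))\beta j\g(\mathcal{G})}$. To promote this to a uniform statement, one counts reduced paths of length at most $\g(\mathcal{G})j$ in $\mathcal{G}^j$: the endpoints range over at most $j\,|V(\mathcal{G})| \leq j\, d^{\diam(\mathcal{G})}$ positions, and from any given vertex a reduced path of that length meets at most $\g(\mathcal{G}) + 1$ original branching vertices of $\mathcal{G}$ where at most $d$ continuations are available, accounting for the prefactor $j^2 d^{\diam(\mathcal{G}) + \g(\mathcal{G})}$ in the union bound.

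The main obstacle I expect is calibrating the constant $\eta(m)/(1-\eta(m))$ precisely: a naive application of the spectral radius bound $p_n(1,r) \leq (2m)^{(\eta(m)-1)n}$ on transition probabilities of the simple random walk on $F_m$ delivers only the coarser constant $1/(1-\eta(m))$. Attaining the sharper stated constant requires exploiting the local limit behaviour of this random walk, in the spirit of \cite{ollivier_sharp_2004}*{Sec. 1.2}, which tightens the tail estimate on the reduced length by controlling its drift. Once the refined single-path bound is in place, the union bound assembles into the claimed probability $P_{qi} \geq 1 - j^2 d^{\diam(\mathcal{G})+\g(\mathcal{G})}(2m)^{-(1-\eta(m))\beta\g(\mathcal{G})j}$.
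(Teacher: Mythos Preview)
Your approach matches the paper's, which simply cites \cite{ollivier_kazhdan_2007}*{Prop.~7.8}: bound the probability that the label of a single immersed path has short free reduction, then union-bound over the at most $j^2 d^{\diam(\mathcal{G})+\g(\mathcal{G})}$ such paths of length $\leqslant \g(\mathcal{G})j$ in $\mathcal{G}^j$.

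Your flagged ``main obstacle'' dissolves without any appeal to local limit behaviour. The Cauchy--Schwarz step that the paper itself uses in Lemma~\ref{L: uniformly chosen random word} for the free-product analogue already suffices in the free-group setting: writing $p_n(1,x)=P(W_n=x)$,
\[
P(|W_n|\leqslant L)=\sum_{|x|\leqslant L} p_n(1,x)\leqslant \Bigl(\sum_{|x|\leqslant L}1\Bigr)^{1/2}\Bigl(\sum_{x} p_n(1,x)^2\Bigr)^{1/2}\leqslant (2m-1)^{L/2}(2m)^{n(\eta(m)-1)},
\]
since $\sum_x p_n(1,x)^2=p_{2n}(1,1)\leqslant (2m)^{2n(\eta(m)-1)}$. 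This yields the multiplicative constant $c_1=\frac{1}{2(1-\eta(m))}$, which is in fact \emph{stronger} than the stated $\frac{\eta(m)}{1-\eta(m)}$ (as $\eta(m)>1/2$), and produces the claimed exponent $(1-\eta(m))\beta\g(\mathcal{G})j$ after the union bound. No refinement via drift or local limit theorems is needed.
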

 In particular, if the folding is a local quasi-isometric embedding, then it maps non-trivial cycles to non-trivial cycles.

 We extract the following estimate from \cite{ollivier_kazhdan_2007}*{Proof of Prop. 7.4, the small cancellation part}. 
\begin{lemma}\label{scwl} For all $m\geqslant 2$, $\beta>0$  such that $\frac{1-{\eta(m)}}{{\eta(m)}} - \beta>0$, $d\in \mathbb{N}$, $\alpha >0$  such that $\frac{1-\eta(m)}{2\eta(m)}-\beta>\alpha$, $j\geqslant 1$if
\begin{enumerate}
 \item  $\deg(v) \leqslant d$ for all $v\in V(\mathcal{G})$,
  \item folding  $\colon\widetilde{\mathcal{G}^j}\to \fold\left(\widetilde{\mathcal{G}^j}\right)$  is  a 
$\left(\frac{\eta(m)}{1-{\eta(m)}}, \beta \g(\mathcal{G})j,\g(\mathcal{G})j\right)$--local quasi-isometric embedding,
  \end{enumerate}   
then the labeling of $\fold\left(\widetilde{\mathcal{G}^j}\right)$ satisfies the $Gr'(\alpha)$--small cancellation condition with respect to the word length metric on the free group with probability
\[P_{sc}\geqslant 1-j^4d^{2\diam(\mathcal{G})+2\g(\mathcal{G})}(2m)^{-(1-{\eta(m)})2\g(\mathcal{G})j\alpha\left( \frac{1-{\eta(m)}}{\eta(m)} - \beta\right)}.\]
 \end{lemma}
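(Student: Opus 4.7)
The plan is a first-moment union-bound argument over the combinatorial configurations that could make the $Gr'(\alpha)$--small cancellation condition of $\fold\bigl(\widetilde{\mathcal{G}^j}\bigr)$ fail. First, I use hypothesis~(2) to extract a word-length threshold. Since the shortest nontrivial cycle in $\widetilde{\mathcal{G}^j}$ has edge length at least $j\,\g(\mathcal{G})$, the QIE hypothesis forces every nontrivial cycle of $\fold\bigl(\widetilde{\mathcal{G}^j}\bigr)$ to carry a reduced label of word length at least $j\,\g(\mathcal{G})\bigl(\tfrac{1-\eta(m)}{\eta(m)}-\beta\bigr)$. Consequently the $Gr'(\alpha)$--condition can fail only in the presence of a graphical piece $p$ of word length at least
\[
 L\ :=\ \alpha\, j\,\g(\mathcal{G})\Bigl(\tfrac{1-\eta(m)}{\eta(m)}-\beta\Bigr).
\]
The hypothesis $\alpha<\tfrac{1-\eta(m)}{2\eta(m)}-\beta$ keeps $L$ safely below the minimum cycle length and, inverting the QIE inequality, forces all lifts of such a piece in $\widetilde{\mathcal{G}^j}$ to have edge length at most $j\,\g(\mathcal{G})$, the scale at which hypothesis~(2) actually applies.

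Next I would enumerate candidate lifts. A violating piece is witnessed by two distinct reduced paths $p_1,p_2$ in $\widetilde{\mathcal{G}^j}$ with identical reduced labels and with edge lengths $\ell_1,\ell_2\leq j\,\g(\mathcal{G})$. Because the interior vertices of $\mathcal{G}^j$ have degree~$2$, any such path visits at most $\g(\mathcal{G})$ branch vertices of the original graph $\mathcal{G}$, giving at most $d^{\g(\mathcal{G})}$ path shapes from a given start. Using the standard estimate $|V(\mathcal{G})|\leq d^{\diam(\mathcal{G})}$ and allowing a factor of $j$ per endpoint for its precise position inside a subdivided edge, the number of ordered candidate pairs $(p_1,p_2)$ is at most $j^4\,d^{2\diam(\mathcal{G})+2\g(\mathcal{G})}$.

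Finally I would bound the probability per pair. For a fixed pair $(p_1,p_2)$, the labels on the edges of $p_1\cup p_2$ that are not common to both are i.i.d.\ uniform in $\{a_1^{\pm 1},\dots,a_m^{\pm 1}\}$; the event that the reduced labels of $p_1$ and $p_2$ coincide is contained in the event that the length-$(\ell_1+\ell_2)$ word $\ell(p_1)\,\overline{\ell(p_2)}$ reduces to $1$ in the free group. The Kesten identity $(2m)^{\eta(m)}=2\sqrt{2m-1}$ and superadditivity of $\log_{2m} S_{2n}$ bound this probability by $(2m)^{-(1-\eta(m))(\ell_1+\ell_2)}\leq (2m)^{-2(1-\eta(m))L}$, and a union bound over the counting in the previous paragraph together with substitution of $L$ yields exactly the asserted lower bound for $P_{sc}$. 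The main obstacle is the failure of outright independence when $p_1$ and $p_2$ share edges: the labels on shared edges contribute deterministically to the cancellation, so the gross-cogrowth bound must be applied only to the independent portion. This is handled by stratifying the enumeration by the combinatorial type of $p_1\cup p_2$, exactly as in the proof of \cite{ollivier_kazhdan_2007}*{Prop.~7.4}; once that book-keeping is done, the remaining work is routine arithmetic matching the exponents to the form stated in the lemma.
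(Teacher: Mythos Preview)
Your proposal is correct and follows essentially the same approach as the paper. The paper does not give its own proof of this lemma but extracts it from \cite{ollivier_kazhdan_2007}*{Proof of Prop.~7.4}; the structure you outline---use the local quasi-isometry hypothesis to bound the minimal reduced cycle length from below, deduce the threshold piece length $L$, bound the edge lengths of lifts, count candidate pairs of paths by $j^4 d^{2\diam(\mathcal{G})+2\g(\mathcal{G})}$, and bound the coincidence probability per pair via the gross cogrowth/return-probability estimate---is exactly that argument, and it is mirrored in the paper's own proof of the free-product analogue (Lemma~\ref{L: scfp}). Your acknowledgment that shared edges spoil independence and your deferral to the stratification in \cite{ollivier_kazhdan_2007} is precisely what the paper does by invoking \cite{ollivier_kazhdan_2007}*{Prop.~7.11}.
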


 As $P_{sc}$ is a conditional probability, where the condition is that the folding is a local quasi-isometric embedding, we conclude:    

\begin{pr} \label{ok07}  For all $m\geqslant 2$, $\beta>0$ such that $\frac{1-{\eta(m)}}{{2\eta(m)}} - \beta>0$, $d\in \mathbb{N}$, $\alpha >0$, such that $\frac{1-\eta(m)}{2\eta(m)}-\beta>\alpha$, $j\geqslant 1$ if $\deg(v) \leqslant d$ for all $v\in V(\mathcal{G})$,    
then the labeling of  $\fold\left(\widetilde{\mathcal{G}^j}\right)$ satisfies the  $Gr'(\alpha)$--small cancellation condition with respect to the word length metric on the free group with probability at least
\[P_{sc}(m,\beta,d,\alpha,j)P_{qi}(m,\beta,d,j).\]
 \end{pr}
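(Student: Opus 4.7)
Proposition \ref{ok07} is essentially an immediate consequence of the two preceding lemmas, combined via elementary probability. On the sample space of uniformly random labelings of $\mathcal{G}^j$ by $\{a_1,\ldots,a_m\}\times\{\pm 1\}$, I would introduce two events. Let $A$ be the event that the folding map $\widetilde{\mathcal{G}^j} \to \fold(\widetilde{\mathcal{G}^j})$ is a $\bigl(\tfrac{\eta(m)}{1-\eta(m)},\,\beta j\g(\mathcal{G}),\,\g(\mathcal{G})j\bigr)$--local quasi-isometric embedding, and let $B$ be the event that the labeling of $\fold(\widetilde{\mathcal{G}^j})$ satisfies the $Gr'(\alpha)$--small cancellation condition with respect to the word length metric.

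The first step is to apply Lemma \ref{lqi}, whose only hypothesis (the degree bound $\deg(v)\leqslant d$) is given, to conclude $\mathbb{P}(A)\geqslant P_{qi}(m,\beta,d,j)$. The second step is to apply Lemma \ref{scwl}: its hypotheses are the same degree bound together with the condition that the folding is a local quasi-isometric embedding, which is precisely the event $A$, while the constraint $\frac{1-\eta(m)}{2\eta(m)}-\beta>\alpha$ it demands on $\alpha$ is identical to the assumption in the proposition. One must also verify the auxiliary assumption $\frac{1-\eta(m)}{\eta(m)}-\beta>0$ of Lemma \ref{scwl}, but this follows from the proposition's hypothesis $\frac{1-\eta(m)}{2\eta(m)}-\beta>0$, since doubling yields $\frac{1-\eta(m)}{\eta(m)}-2\beta>0$ and hence $\frac{1-\eta(m)}{\eta(m)}-\beta>\beta>0$. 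Interpreting the probability in Lemma \ref{scwl} as the \emph{conditional} probability given the occurrence of its hypothesis~2 (as made explicit in the text preceding the proposition), one obtains $\mathbb{P}(B\mid A)\geqslant P_{sc}(m,\beta,d,\alpha,j)$.

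The final step is the standard chain-rule inequality
\[
\mathbb{P}(B)\;\geqslant\;\mathbb{P}(B\cap A)\;=\;\mathbb{P}(B\mid A)\,\mathbb{P}(A)\;\geqslant\;P_{sc}\cdot P_{qi},
\]
which is the claimed bound. There is no substantive obstacle; the entire argument is a bookkeeping check that the hypotheses of Lemmas \ref{lqi} and \ref{scwl} are implied by those of the proposition, followed by a one-line application of conditional probability.
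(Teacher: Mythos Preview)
Your proposal is correct and follows exactly the paper's approach: the paper's entire argument is the single sentence preceding the proposition, namely that $P_{sc}$ is by definition the conditional probability given the quasi-isometric-embedding event, so the unconditional small cancellation probability is at least $P_{sc}P_{qi}$. You have simply spelled out this one-line observation in detail, including the routine verification that the hypotheses of Lemmas~\ref{lqi} and~\ref{scwl} are met.
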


 \vspace{0.5cm}
  
 Let us now consider the Selberg family of graphs $\mathcal{S}:=(\mathcal{S}_i)_i$~\cite{lubotzky_discrete_1994}:
 \begin{enumerate}
  \item for all vertices $v$ in $\mathcal{S}$, $3\leqslant \deg(v) \leqslant d$ for some fixed $d\in \mathbb{N}$, 
  \item \label{as: spectral} $\lambda(\mathcal{S}_i)\geqslant\lambda_0>0$ uniformly over all $i$ for some constant $\lambda_0$,
  \item $\g(\mathcal{S}_i)_i \to \infty$ as $i\to \infty$,
  \item there is $C>1$ such that $\diam(\mathcal{S}_i)\leqslant C \g (\mathcal{S}_i)$ for all $i$.
 \end{enumerate}

Choose $\beta>0$ such that $\frac{1-{\eta(m)}}{{2\eta(m)}} - \beta>0$. For all $\alpha>0$  such that $\frac{1-\eta(m)}{2\eta(m)}-\beta>\alpha$, the probability that the labeling of $\widetilde{\mathcal{S}_i^j}$ satisfies the $Gr'(\alpha)$--small cancellation condition and $G\left(\widetilde{\mathcal{S}_i^j}\right)$ has Property $(T)$ is at least 
 \begin{align*}
P_{T}(m,d,\lambda_0,j)(i)+P_{sc}(m,d,\alpha,\beta,j)(i)P_{qi}(m,d,\alpha,\beta,j)(i)-1.
 \end{align*}

There exists $j_0$ so that for all $j\geqslant j_0$ we have that 
  \[d^{2(C+1)}(2m)^{-(1-{\eta(m)})\beta j}<1\text{ and }d^{2(C+1)}(2m)^{-(1-{\eta(m)})2j\alpha\left( \frac{1-{\eta(m)}}{\eta(m)} - \beta\right)}<1.\]

 Then,  $P_{sc}P_{qi}$ converges to $1$  exponentially as $i\to \infty$.  
 Simultaneously, the probability $P_T$ converges to $1$  exponentially as  $i\to \infty$.
 
 A graph satisfying the $Gr'(\alpha)$--small cancellation condition clearly satisfies the $Gr'(\alpha')$-- condition for all $\alpha'\geqslant \alpha$. Theorem \ref{T: ap2} follows.

 \subsection{Proof of Theorem \ref{T: ap1} and  Theorem \ref{T: ap3}}\label{S: proof T}

We extend the proof from \cite{ollivier_kazhdan_2007}, in particular, Lemmas \ref{lqi} and \ref{scwl} to the free product setting. We view $\widetilde{\mathcal{G}^j}$ with the edge length and $\fold(\widetilde{\mathcal{G}^j})$ with the free product length over $\langle a_1\rangle*\ldots*\langle a_m \rangle$.  The probability that the map \emph{folding} $\colon \widetilde{\mathcal{G}^j}\to \fold\left(\widetilde{\mathcal{G}^j}\right)$ is a local quasi-isometric embedding is denoted by $P_{qi}^*$, and  the \emph{conditional} probability that the labeling of $\fold\left(\widetilde{\mathcal{G}^j}\right)$ satisfies the $Gr_*'(\alpha)$--small cancellation condition over  $\langle a_1\rangle*\ldots*\langle a_m \rangle$, under the condition that the folding is a local quasi-isometric embedding, is  denoted by $P_{sc}^*$. That is, the probability that the labeling of $\fold\left(\widetilde{\mathcal{G}^j}\right)$ satisfies the $Gr_*'(\alpha)$--small cancellation condition is at least $P_{qi}^*P_{sc}^*$.

We derive  lower bounds for this probabilities. Our results then require a careful analysis of the obtained estimates.

  \begin{lemma} \label{L: uniformly chosen random word} Let $W_l$ be a word of length $l$ in $2m$ letters chosen uniformly at random. Then 
  \[P(|W_l|_*\leqslant L)\leqslant (2m-1)^{\frac L2}\left( \frac{lm}{2m-1}\right)^{\frac12}\left(\frac{\sqrt2}{(2m)^{1-{\eta(m)}}}\right)^l.\]
  \end{lemma}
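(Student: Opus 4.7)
The plan is to combine a Cauchy--Schwarz inequality with the standard return-probability estimate for the simple random walk on the free group. Writing
\[P(|W_l|_*\le L)=\sum_{w:\,|w|_*\le L,\,|w|\le l}P(W_l=w),\]
Cauchy--Schwarz yields
\[P(|W_l|_*\le L)^2\le \#\{w:|w|_*\le L,\,|w|\le l\}\cdot \sum_{w\in F_m}P(W_l=w)^2.\]
Taking an independent copy $W_l'$ of $W_l$, we have $\sum_w P(W_l=w)^2=P(W_l=W_l')=P(W_{2l}=e)=S_{2l}/(2m)^{2l}$, since $W_l(W_l')^{-1}$ is a uniform random word of length $2l$ equal to $e$ in $F_m$ precisely when $W_l=W_l'$. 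By Fekete's lemma applied to the superadditive sequence $(\log S_n)_n$, together with Kesten's formula $(2m)^{\eta(m)}=2\sqrt{2m-1}$, one obtains $S_{2l}\le (2m)^{2l\eta(m)}$, so $\sqrt{P(W_{2l}=e)}\le (2m)^{-l(1-\eta(m))}$.

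Next, I would count the reduced words. Any reduced word $w$ with $|w|_*=k\ge 1$ and $|w|=n\ge k$ is uniquely determined by alternating generators $i_1,\ldots,i_k\in\{1,\ldots,m\}$ with $i_j\neq i_{j+1}$ (contributing $m(m-1)^{k-1}$ choices), signs $\epsilon_1,\ldots,\epsilon_k\in\{\pm1\}$ ($2^k$ choices), and positive syllable lengths $Q_1,\ldots,Q_k$ summing to $n$ ($\binom{n-1}{k-1}$ compositions). Summing over $n\le l$ collapses $\binom{n-1}{k-1}$ into $\binom{l}{k}$, and summing over $k=0,\ldots,L$ via the geometric series $\sum_{k=1}^{L}(2(m-1))^{k-1}$ together with $\binom{l}{k}\le 2^l$ yields
\[\#\{w:|w|_*\le L,\,|w|\le l\}\le lm(2m-1)^{L-1}2^l.\]
Inserting this together with the return-probability estimate into the Cauchy--Schwarz inequality gives
\[P(|W_l|_*\le L)\le\sqrt{lm(2m-1)^{L-1}2^l}\cdot (2m)^{-l(1-\eta(m))},\]
which rearranges to the claimed bound.

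The main obstacle is the counting step: the target has the very specific algebraic shape $\sqrt{lm(2m-1)^{L-1}2^l}$, so the constants $m/(m-1)$ and $1/(2m-3)$ arising from the geometric series must be absorbed into the polynomial prefactor $lm$. This is routine for $l$ sufficiently large relative to $m$; in the complementary regime the claimed right-hand side already exceeds $1$ and the statement is vacuous.
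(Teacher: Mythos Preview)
Your overall strategy coincides with the paper's: both split the probability via Cauchy--Schwarz into a counting factor and $\bigl(\sum_w P(W_l=w)^2\bigr)^{1/2}$, identify the latter with the return probability $S_{2l}/(2m)^{2l}$, and bound it by $(2m)^{-l(1-\eta(m))}$. Your justification of this last estimate via Fekete's lemma is correct and in fact more explicit than what the paper writes. The paper organises things slightly differently: rather than applying Cauchy--Schwarz to $\sum_{|w|_*\le L}p_w$ directly, it first passes through Markov's inequality for the exponential moment $\mathbb{E}\bigl[(2m-1)^{-|W_l|_*/2}\bigr]$. The point of this detour is that the resulting counting factor becomes $\sum_{x\in B_l}(2m-1)^{-|x|_*}$, a weighted sum over \emph{all} syllable lengths $k$, and the weight makes the binomial sum $\sum_k\binom{l-1}{k-1}=2^{l-1}$ collapse exactly to $\frac{lm}{2m-1}2^l$ with no loose constants.

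Your counting step, by contrast, has a real gap. Bounding $\binom{l}{k}\le 2^l$ and summing the geometric series produces a factor like $\frac{2m}{2m-3}(2m-2)^L2^l$, which is not $\le lm(2m-1)^{L-1}2^l$ without a lower bound on $l$. Your fallback---that the right-hand side of the lemma exceeds $1$ in the complementary regime---is false when $m$ is large: since $(2m)^{1-\eta(m)}=m/\sqrt{2m-1}$, the base $\sqrt{2}/(2m)^{1-\eta(m)}=\sqrt{2(2m-1)}/m$ is below $1$ for all $m\ge 4$, so the stated bound is already nontrivial for small $l$ (for instance $m=100$, $l=4$, $L=0$ gives a right-hand side of order $10^{-3}$). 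The repair is immediate and uses your own ingredients: observe that $\#\{w:|w|_*\le L,\ |w|\le l\}\le (2m-1)^L\sum_{w\in B_l}(2m-1)^{-|w|_*}$, and evaluate the latter sum exactly as the paper does; this yields the target $lm(2m-1)^{L-1}2^l$ for all $l$ and $L$, and the rest of your argument then goes through unchanged.
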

 
 \begin{proof}
 Let $B_l$ be the ball of radius $l$ with respect to the word length metric in the free group on $m$ generators. Let $p^l_x$ denote the probability that $W_l=x$ where the equality is in the free group.
 
 The number of elements $x$ in $B_l$ such that $|x|_*=k$ is at most
 
  \[\sum_{k\leqslant l' \leqslant l}\binom{l'-1}{k-1}2m(2m-2)^{k-1}\leqslant l\binom{l-1}{k-1} 2m(2m-1)^{k-1}.\]
 
   Hence 
   \begin{align}
   \sum_{x\in B_l}(2m-1)^{-|x|_*}&\leqslant \sum_{1\leqslant k \leqslant l}l\binom{l-1}{k-1}2m(2m-2)^{k-1}(2m-1)^{-k} \label{a1}\\
   &\leqslant l \frac{2m}{2m-1} 2^{l-1}. \nonumber
   \end{align}
   We compute the expected value,
   \begin{align*}
   \mathbb{E}\left((2m-1)^{-\frac12|W_l|_*}\right)&=\sum_{x\in B_l}(2m-1)^{-\frac12 |x|_*} p^l_x.
   \end{align*}
   By the Cauchy-Schwartz inequality, this is bounded by
   \begin{align*}
   & \leqslant \left(\sum_{x\in B_l}(2m-1)^{|x|_*}\right)^{\frac12}\left(\sum_{x\in B_l}(p^l_x)^2\right)^{\frac12}.
   \end{align*} 
   The right term $\sum_{x\in B_l}(p^l_x)^2$ is the return probability of the simple random walk on the free group of rank $m$ at time $2l$. This probability is at most $(2m)^{-(1-{\eta(m)})2l}$. 
  Applying inequality \eqref{a1},  we have that 
  \begin{align*}
   \mathbb{E}\left((2m-1)^{-\frac12|W_l|_*}\right)&\leqslant  \left(l \frac{2m}{2m-1} 2^{l-1}\right)^{\frac12}(2m)^{-(1-{\eta(m)})l}\\
   &= \left(\frac{lm}{2m-1} \right)^{\frac12}\left(\frac{\sqrt2}{(2m)^{1-{\eta(m)}}}\right)^l.
  \end{align*} 
The result now follows using Markov's inequality,
\begin{align*}
 P(|W_l|_*\leqslant L)&=P\left((2m-1)^{-L/2} \leqslant(2m-1)^{-\frac12|W_l|_*}\right)\\
 &\leqslant (2m-1)^{-L/2} \mathbb{E}\left((2m-1)^{-\frac12|W_l|_*}\right).
\end{align*}

 \end{proof}
 
 \begin{lemma}\label{L: fpqi} For all  $m\geqslant 2$, $d\in \mathbb{N}$,  $\beta>0$,  $j\geqslant 1$, if $\deg(v) \leqslant d$ for all $v\in V(\mathcal{G})$, then the folding map from   $\widetilde{\mathcal{G}^j}$, equipped with the edge length, to  $\fold\left(\widetilde{\mathcal{G}^j}\right)$, equipped with the free product metric in $\langle a_1\rangle* \ldots *\langle a_m\rangle$,  is a $\left(\frac1{2(1-{\eta(m)})},\beta \g(\mathcal{G})j,\g(\mathcal{G})j\right)$--local quasi-isometric embedding with probability  $P_{qi}^*$, which is
 \[\geqslant 1-(jd)^2d^{\diam(\mathcal{G})+\g(\mathcal{G})}\left(\g(\mathcal{G}) j\frac {m}{2m-1}\right)^{\frac12}2^{\frac {\g(\mathcal{G})j}2}\left(\frac{\sqrt2 (2m)^{{\eta(m)}}}{2m}\right)^{\beta \g(\mathcal{G})j}.\]
 \end{lemma}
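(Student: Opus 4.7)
The plan is to follow the strategy of the word-length version (our Lemma \ref{lqi}) but with the free product length on the target and invoking Lemma \ref{L: uniformly chosen random word} in place of its word-length analogue. The upper bound of the quasi-isometric embedding is immediate: folding never increases the edge-length distance, and the free product length is dominated by the word length on the free group, so $d_Y(\phi(v_1),\phi(v_2)) \leq d_X(v_1,v_2)$; since $1/(2(1-\eta(m))) \geq 1$ and $\beta \g(\mathcal{G}) j \geq 0$, this yields the upper bound trivially.

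For the lower bound $d_Y \geq 2(1-\eta(m)) d_X - \beta \g(\mathcal{G})j$, I would reduce to the statement: with the claimed probability, every reduced edge-path $p$ of length $l \leq \g(\mathcal{G}) j$ in $\widetilde{\mathcal{G}^j}$ satisfies $|\ell(p)|_* \geq 2(1-\eta(m))l - \beta \g(\mathcal{G}) j$. This reduction uses that a reduced path in $\fold(\widetilde{\mathcal{G}^j})$ realizing $d_Y$ lifts as an immersion to an edge-path in $\widetilde{\mathcal{G}^j}$ whose label has the same free product length. Next, I would enumerate (vertex, reduced path) pairs: the vertex count of $\widetilde{\mathcal{G}^j}$ is at most $jd\cdot d^{\diam(\mathcal{G})}$ via the standard ball-growth bound for a connected graph of maximum degree $d$ and diameter $\diam(\mathcal{G})$, and a reduced path of length $\leq \g(\mathcal{G}) j$ from a fixed vertex can branch only at the at most $\g(\mathcal{G})$ original vertices of $\mathcal{G}$ it meets (subdivision vertices having degree $2$), contributing at most $jd\cdot d^{\g(\mathcal{G})}$ such paths after including initial-edge and length choices. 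The product $(jd)^2 d^{\diam(\mathcal{G})+\g(\mathcal{G})}$ is the total.

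For each such pair, the label $\ell(p)$ of length $l$ is distributed as a uniform random word in the $2m$ letters $\{a_i^{\pm 1}\}$, since the edge labels are independent and a reduced path traverses distinct edges. I would apply Lemma \ref{L: uniformly chosen random word} with $l = \g(\mathcal{G}) j$ and threshold $L = 2(1-\eta(m))\g(\mathcal{G}) j - \beta \g(\mathcal{G}) j$, then use Kesten's identity $(2m)^{\eta(m)} = 2\sqrt{2m-1}$ to rewrite the factor $(2m-1)^{L/2}$ so that, combined with $(\sqrt{2}/(2m)^{1-\eta(m)})^{\g(\mathcal{G}) j}$, it produces the desired form $2^{\g(\mathcal{G}) j/2}\bigl(\sqrt{2}(2m)^{\eta(m)}/(2m)\bigr)^{\beta \g(\mathcal{G}) j}$. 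A union bound over the enumerated pairs then gives the stated estimate.

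The hard part will be the algebraic balancing in the last step: the threshold $L$ and Kesten's identity must combine so that the $(2m)$-exponent collapses to exactly $-(1-\eta(m))\beta \g(\mathcal{G}) j$ while the $2$-exponent collapses to $(1+\beta)\g(\mathcal{G}) j/2$. A secondary technicality is the lifting of reduced paths from the folded graph back to $\widetilde{\mathcal{G}^j}$, which requires some care but is a standard consequence of foldings being immersions on reduced labeled graphs, so that the free product distance in $\fold(\widetilde{\mathcal{G}^j})$ is controlled by free product lengths of labels of reduced paths in $\widetilde{\mathcal{G}^j}$.
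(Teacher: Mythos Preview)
Your proposal is correct and follows essentially the same route as the paper: reduce the local quasi-isometric embedding to a free-product-length lower bound on labels of paths of edge length at most $\g(\mathcal{G})j$, invoke Lemma~\ref{L: uniformly chosen random word} for each path, and take a union bound over the $(jd)^2 d^{\diam(\mathcal{G})+\g(\mathcal{G})}$ paths. The only cosmetic difference is that the paper parametrizes the path length as $\beta\g(\mathcal{G})j+\ell$ so that the threshold becomes simply $2(1-\eta(m))\ell$, which slightly streamlines the application of Lemma~\ref{L: uniformly chosen random word} and the subsequent maximization over path lengths (the worst case being $\ell=(1-\beta)\g(\mathcal{G})j$, i.e.\ your $l=\g(\mathcal{G})j$); the paper then leaves the algebraic simplification via Kesten's identity implicit in ``combine both estimates''.
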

 \begin{proof} Let $g:=\g(\mathcal{G})$. 
  Choose a path $p$ of edge length $\beta gj+\ell\leqslant gj$ in $\widetilde{\mathcal{G}^j}$. It suffices to show that $|\ell(p)|_*>2(1-{\eta(m)})\ell$, where $\vert \cdot \vert_*$ denotes the free product length on the folded graph $\fold\left(\widetilde{\mathcal{G}^j}\right)$. 
  
  The probability that a random labeling $\omega$ of $p$, i.e. a word in $\beta gj+\ell$ letters, chosen uniformly at random, has the free product length at most $2(1-{\eta(m)})\ell$ has been estimated  in Lemma \ref{L: uniformly chosen random word}. It is at most
  \[
  (2m-1)^{(1-{\eta(m)})\ell}\left( \frac{(\beta gj+\ell)m}{2m-1}\right)^{\frac12}\left(\frac{\sqrt2}{(2m)^{1-{\eta(m)}}}\right)^{\beta gj+\ell}.
  \]
  
There are at most $(jd)^2d^{\diam(\mathcal{G})+\g(\mathcal{G})}$ paths of length $\leqslant gj$ in $\mathcal{G}^j$. Indeed, there are at most $d^{\diam(\mathcal{G})}$ starting vertices for a simple path in $\mathcal{G}$. There are at most $d^{\diam(\mathcal{G})+l}$ possibilities of paths of length $\leqslant l$ in $\mathcal{G}$.  A path in $\mathcal{G}^j$ of edge length $l'$ is traveling along $l'/j$ vertices in $\mathcal{G}$ with at most $jd$ possibilities to choose the starting/terminal vertex. Therefore, there are at most $(jd)^2d^{\diam(\mathcal{G})+l'/j}$ possibilities for paths of length $l'$ in $\mathcal{G}^j$.
  
  We combine both estimates to complete the proof.
 \end{proof}

Compared to the estimate of $P_{qi}$ in Lemma \ref{lqi}, we have a new subexponential term and a new exponential term $2^{{\g(\mathcal{G})j}/2}2^{\beta\g(\mathcal{G})j/2}$ in our estimate of $P^*_{qi}$. To obtain the required results we therefore need a more careful analysis than above.

  \begin{pr}\label{P: fix beta} For all $m> 35$, $d\in \mathbb{N}$, there is $j_0>0$ such that for all  $j> j_0$ the folding from  $\widetilde{\mathcal{S}_i^j}$, equipped with the word length metric, to  $\fold\left(\widetilde{\mathcal{S}_i^j}\right)$, equipped with the free product metric in $\langle a_1\rangle* \ldots *\langle a_m\rangle$,  is a $\left(\frac1{2(1-{\eta(m)})},1/3  \g(\mathcal{G})j,\g(\mathcal{G})j\right)$--local quasi-isometric embedding with probability tending to $1$ exponentially as $i\to \infty$.
 \end{pr}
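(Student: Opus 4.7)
The plan is to apply Lemma~\ref{L: fpqi} with the fixed value $\beta = 1/3$ to the Selberg family $\mathcal{S}_i$ and to analyze when the resulting upper bound on $1-P_{qi}^*$ decays exponentially in $i$. Since $\g(\mathcal{S}_i)\to\infty$ as $i\to\infty$ and the Selberg family is of uniformly bounded degree with $\diam(\mathcal{S}_i)\leqslant C\,\g(\mathcal{S}_i)$, the task reduces to showing that the relevant exponential coefficient in $g:=\g(\mathcal{S}_i)$ can be made negative by choosing $j$ large, provided that $m>35$.

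Applying Lemma~\ref{L: fpqi} with $\beta=1/3$ and using $\diam(\mathcal{S}_i)\leqslant Cg$, I obtain
\[
1-P_{qi}^* \;\leqslant\; (jd)^2\, d^{(C+1)g}\,\Bigl(\frac{gjm}{2m-1}\Bigr)^{\!1/2}\, 2^{gj/2}\,\Bigl(\tfrac{\sqrt{2}(2m)^{\eta(m)}}{2m}\Bigr)^{\!gj/3}.
\]
Substituting Kesten's identity $(2m)^{\eta(m)}=2\sqrt{2m-1}$ simplifies the last factor to $\bigl(\sqrt{2(2m-1)}/m\bigr)^{gj/3}$, so the right-hand side takes the form $P(g,j)\cdot e^{g\,\phi(j,m)}$ with $P$ polynomial in $g$ and
\[
\phi(j,m)\;=\;(C+1)\log d \;+\; j\Bigl(\tfrac{\log 2}{2} + \tfrac{1}{3}\log\tfrac{\sqrt{2(2m-1)}}{m}\Bigr).
\]
It therefore suffices to pick $j_0$ so that $\phi(j,m)<0$ for all $j>j_0$, which is possible as soon as the coefficient of $j$ in $\phi$ is strictly negative.

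That coefficient is negative exactly when $2^{3/2}\cdot\sqrt{2(2m-1)}/m<1$, i.e.\ $4\sqrt{2m-1}<m$, i.e.\ $m^2-32m+16>0$. The relevant root is $m=16+4\sqrt{15}\approx 31.49$, so the inequality holds for every integer $m\geqslant 32$, and a fortiori for $m>35$. Once this strictly negative slope is secured, I take $j_0$ large enough to absorb the constant term $(C+1)\log d$; for every $j>j_0$ the polynomial prefactor $P(g,j)$ is dominated by $e^{g\,\phi(j,m)}$, yielding $1-P_{qi}^*\to 0$ exponentially as $i\to\infty$.

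The main obstacle is already absorbed in Lemma~\ref{L: fpqi}: compared with the word-length estimate of Lemma~\ref{lqi}, the free-product version carries an extra exponential factor $2^{gj/2}$, which is the reason one cannot simply reuse Ollivier's bound and has to invoke Kesten's explicit value of the gross cogrowth to decide when the algebra $\frac{\log 2}{2}+\frac{1}{3}\log\frac{\sqrt{2(2m-1)}}{m}<0$ closes. All other inputs---the boundedness of degrees, the comparison $\diam\leqslant C\g$, and the growth $\g(\mathcal{S}_i)\to\infty$---are standard properties of the Selberg family used exactly as in Ollivier--Wise.
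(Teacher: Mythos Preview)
Your proof is correct and follows essentially the same route as the paper: apply Lemma~\ref{L: fpqi} with $\beta=1/3$ to the Selberg family, use Kesten's identity $(2m)^{\eta(m)}=2\sqrt{2m-1}$ to reduce the exponential decay condition to $m^2-32m+16>0$, and then choose $j_0$ large enough to absorb the $d^{(C+1)}$ factor. The paper additionally records the side observation $1-\eta(m)>1/3$ for $m>35$, which is not needed here but is used later when applying Lemma~\ref{L: scfp}.
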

 Compared to Lemma \ref{L: fpqi}, we have specified $\beta=1/3$.
 \begin{proof}
 The claim follows when $$2^{\frac12}\left(\frac{\sqrt2 (2m)^{{\eta(m)}}}{2m}\right)^{1/3}<1.$$ Then there is $j_0$ such that for all $j>j_0$ we have that  \[d^{(C+1)}\left(2^{\frac12}\left(\frac{\sqrt2 (2m)^{{\eta(m)}}}{2m}\right)^{1/3}\right)^j<1. \]
By Lemma \ref{L: fpqi}, $P^*_{qi}\to 1$ exponentially as $i\to \infty$.
 
 By a result of Kesten \cite{kesten_symmetric_1959}*{Th. 3}, $(2m)^{{\eta(m)}}=2\sqrt{2m-1}$, so we need that 
 $$\left(\frac{\sqrt2 \sqrt{2m-1}}{m}\right)^{1/3}<\frac1{\sqrt{2}}.$$
 That is,
 $0<m^2-32 m +16.$ 
This holds for all $m> 35$. Note that 
${\eta(m)} <\frac23$ if $m>35$. We therefore have
\[
1-{\eta(m)}>\frac13.
\]
We conclude as before.
   \end{proof}

 \begin{lemma}\label{L: scfp}For all $m\geqslant 2$, $d\in \mathbb{N}$,  $\beta$ such that $1-{\eta(m)}-\beta>0$, $\alpha>0$ such that $\alpha<(1-{\eta(m)})-\beta$, $j\geqslant1$, if
\begin{enumerate}
 \item  the vertices of the graph $\mathcal{G}$ have degree at most $d$,
  \item folding $\colon \widetilde{\mathcal{G}^j}\to \fold\left(\widetilde{\mathcal{G}^j}\right)$ is a $\left(\frac1{2(1-{\eta(m)})}, \beta \g(\mathcal{G})j,\g(\mathcal{G})j\right)$--local quasi-isometric embedding, where $\fold\left(\widetilde{\mathcal{G}^j}\right)$ is with the free product metric in $\langle a_1\rangle*\ldots *\langle a_m\rangle$,
  \end{enumerate}   
then the labeling of $\fold\left(\widetilde{\mathcal{G}^j}\right)$ satisfies the $Gr_*'(\alpha)$--small cancellation condition over $\langle a_1\rangle*\ldots *\langle a_m\rangle$ with probability 
\[
P^*_{sc}\geqslant 1-(jd)^4d^{2\diam(\mathcal{G})+2\g(\mathcal{G})}(2m)^{-(1-{\eta(m)})2\g(\mathcal{G})j\alpha\left( 2(1-{\eta(m)}) - \beta\right) }.
\]
   \end{lemma}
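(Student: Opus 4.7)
The plan is to follow the proof of Lemma \ref{scwl} from \cite{ollivier_kazhdan_2007} step by step, substituting the free product length in place of the word length and inserting the quasi-isometric constants provided by Lemma \ref{L: fpqi}. Throughout, let $g := \g(\mathcal{G})$, fix a random labeling of $\mathcal{G}^j$ under which folding is the stated local quasi-isometric embedding with respect to edge length on the source and free product length on the target, and set $L_0 := \alpha (2(1-\eta(m)) - \beta)\, gj$.

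First, I would observe that the local quasi-isometric embedding forces the minimum free product length of a non-trivial cycle in $\fold(\widetilde{\mathcal{G}^j})$ to be at least $(2(1-\eta(m)) - \beta)\, gj$: apply the lower bound of the quasi-isometry (with constants $c_1 = 1/(2(1-\eta(m)))$, $c_2 = \beta gj$) to any subpath of edge length $gj$ sitting inside a minimal non-trivial closed walk lifting a target cycle. Consequently, a failure of the $Gr_*'(\alpha)$ condition requires a graphical piece whose label has free product length at least $L_0$; since $|\cdot|_* \leqslant |\cdot|$ in the free group, this piece lifts to a pair of distinct immersed paths $p_1 \neq p_2$ in $\widetilde{\mathcal{G}^j}$ of common edge length $\ell$ with $L_0 \leqslant \ell \leqslant gj$ and identical labels in $F_m$.

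Next, I would union-bound over such pairs. As in the counting argument already used to prove Lemma \ref{L: fpqi}, the number of pairs of immersed paths of edge length at most $gj$ in $\mathcal{G}^j$ is bounded by $(jd)^4 d^{2\diam(\mathcal{G}) + 2g}$. For any fixed pair of distinct reduced paths, the probability that their random labels coincide in the free group is bounded by $(2m)^{-2(1-\eta(m))\ell}$, the return probability of the simple random walk on $F_m$ at time $2\ell$ via Kesten's bound, exactly as in \cites{silberman_random_2003, ollivier_kazhdan_2007}; the bookkeeping for pairs sharing edges transfers verbatim since the labeling procedure is unchanged. Summing the geometric series over $\ell \in [L_0, gj]$ is dominated by its first term and gives
\[
(jd)^4 d^{2\diam(\mathcal{G}) + 2g} (2m)^{-2(1-\eta(m))L_0} = (jd)^4 d^{2\diam(\mathcal{G}) + 2g} (2m)^{-(1-\eta(m))\cdot 2gj\alpha(2(1-\eta(m)) - \beta)},
\]
which is precisely the claimed bound on $1 - P_{sc}^*$.

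The main new ingredient, and essentially the only obstacle, is the first reduction: showing that a violation of the free product small cancellation condition automatically produces a pair of immersed paths of large \emph{edge} length, so that no delicate conditional estimate on ``free product length given matching labels'' is required. The trivial inequality $|w|_* \leqslant |w|$ does exactly this job, and it is what permits slotting the standard return-probability estimate into the union bound with the new threshold $L_0$ dictated by the free-product quasi-isometry constant $2(1-\eta(m))$ rather than by the word-length constant $\eta(m)/(1-\eta(m))$ of Lemma \ref{lqi}.
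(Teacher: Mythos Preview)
Your outline follows the paper's strategy closely, but there is a real gap at the step where you pass from a long piece in $\fold(\widetilde{\mathcal{G}^j})$ to lifts $p_1,p_2$ in $\widetilde{\mathcal{G}^j}$ of edge length $\ell\leqslant gj$. You assert this range but never justify the upper bound, and the Ollivier--Wise estimate you invoke (their Proposition~7.11, restated in the paper) actually requires the stronger bound $\ell\leqslant gj/2$; without it the ``bookkeeping for pairs sharing edges'' does \emph{not} transfer verbatim, because two immersed paths longer than half the girth can overlap in complicated ways and their labels are no longer close enough to independent for the return-probability bound. A piece of large free-product length can a~priori have very long lifts, so nothing you have written rules out $\ell>gj$.

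This is exactly where the paper uses the hypothesis $\alpha<(1-\eta(m))-\beta$, which your argument never touches. The paper first passes to a piece $p$ of the \emph{critical} free-product length $|\ell(p)|_*=\alpha\bar g$ (a subpath of a too-long piece is still a piece), and then argues by contradiction: if a lift $q$ had $|\ell(q)|>gj/2$, the local quasi-isometry at scale $gj$ would force $|\ell(p)|_*>\bigl((1-\eta(m))-\beta\bigr)gj$, while $\alpha\bar g\leqslant\alpha\,gj<\bigl((1-\eta(m))-\beta\bigr)gj$. Hence both lifts have edge length at most $gj/2$, and Proposition~7.11 applies with $l+l'\geqslant 2\alpha\bar g\geqslant 2\alpha\bigl(2(1-\eta(m))-\beta\bigr)gj$. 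Once you insert this step your proof coincides with the paper's; as written, the unused hypothesis $\alpha<(1-\eta(m))-\beta$ is the telltale that the upper bound on $\ell$ is missing. (A minor point: the two lifts need not have a common edge length~$\ell$; the paper works with separate lengths $l,l'$ and uses $l+l'\geqslant 2\alpha\bar g$.)
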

\begin{proof} Let $g:=\g(\mathcal{G})$. 
First observe that  by the quasi-isometry assumption 
    \[
   \bar{g}:=\min|\text{labels of non-trivial cycles in } \fold(\widetilde{\mathcal{G}^j})|_*\geqslant (2(1-{\eta(m)})-\beta)gj.                 
           \]
 
Let $\alpha<(1-{\eta(m)})-\beta$. It suffices to estimate the probability that there are no $\alpha$-pieces, that is, pieces $p$  in $\fold(\widetilde{\mathcal{G}^j})$ such that $|\ell(p)|_{*}=\alpha\bar{g}$.
 Let $q,q'$ be immersed paths in $\widetilde{\mathcal{G}^j}$ whose folding equals $p$. For the word length $|\ell(q)|,|\ell(q')|\leqslant \frac{gj}2$. Indeed, otherwise, $|\ell(p)|_*>((1-{\eta(m)})-\beta)gj$ by the quasi-isometry assumption, a contradiction. On the other hand, $|\ell(q)|,|\ell(q')|\geqslant |\ell(p)|\geqslant |\ell(p)|_*=\alpha \bar{g}$.

 Therefore, suppose that $q$ and $q'$ are $\alpha \bar{g}$ pieces in $\widetilde{\mathcal{G}^j}$. We now apply the following.
 \begin{lemma}[\cite{ollivier_kazhdan_2007}*{Prop. 7.11}]
  Let $q,q'$ be two immersed paths in a graph $\mathcal{G}$ of girth $g$. Suppose that $q$ and $q'$ have length $l$ and $l'$ respectively, with $l$ and $l'$ at most $g/2$. Endow $\mathcal{G}$ with a random labeling. Suppose that after folding the graph, the paths $q$ and $q'$ are mapped to distinct paths. Then the probability that $q$ and $q'$ are labeled by two freely equal words is at most \[C_{l,l'}(2m)^{-(1-{\eta(m)})(l+l')},\] where $C_{l,l'}$ is a term growing subexponentially in $l+l'$.
 \end{lemma}

Hence, the probability that two paths $q$, $q'$ in $\widetilde{\mathcal{G}^j}$ are $\alpha \bar{g}$ pieces in $\widetilde{\mathcal{G}^j}$ is at most  
 \[C_{gj}(2m^{-(1-{\eta(m)})2gj\alpha(2(1-{\eta(m)})-\beta)}),\] where $C_{gj}$ is a sub-exponential term in $g$. 
 The probability that there are two such paths $q$, $q'$ in $\widetilde{\mathcal{G}^j}$ is at most
 $$j^4 v^{2\diam(\mathcal{G})+2g}C_{gj}(2m^{-(1-{\eta(m)})2gj\alpha(2(1-{\eta(m)})-\beta)}).$$
\end{proof}

\begin{them}\label{small cancellation conditions are generically equivalent}
For all $m>64$, $d\in \mathbb{N}$, there is $j_0>0$ such that for all positive numbers $j>j_0$, the labeling of $\fold{\left(\widetilde{\mathcal{S}_i^j}\right)}$ satisfies both
 \begin{itemize}
 \item the $Gr_*'(\alpha)$--small cancellation condition  over $\langle a_1\rangle * \ldots * \langle a_m\rangle$, and
 \item the $Gr'(\alpha)$--small cancellation condition with respect to the word length metric in the free group on $a_1,\ldots,a_m$,
 \end{itemize}
with probability tending to $1$ exponentially as $i\to \infty$.
\end{them}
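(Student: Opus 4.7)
The plan is to combine the probabilistic estimates developed above---Propositions~\ref{ok07} and~\ref{P: fix beta} together with Lemmas~\ref{lqi}, \ref{L: fpqi}, \ref{scwl}, and~\ref{L: scfp}---and to exploit the monotonicity that $Gr'(\alpha)$ and $Gr_*'(\alpha)$ both persist when $\alpha$ is enlarged. It therefore suffices to produce, for each $m>64$, a single pair $(\beta,\alpha)$ of positive parameters for which \emph{both} $P_{qi}P_{sc}\to 1$ (the word-length estimates from Lemmas~\ref{lqi} and~\ref{scwl}) and $P^*_{qi}P^*_{sc}\to 1$ (the free-product estimates from Lemmas~\ref{L: fpqi} and~\ref{L: scfp}) hold exponentially fast along the Selberg family $\mathcal{S}=(\mathcal{S}_i)_i$.

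The first step is to pin down $\beta$. Lemma~\ref{L: fpqi}, as analyzed in Proposition~\ref{P: fix beta}, forces $\beta$ to be larger than a threshold $\beta_{\min}(m)$ determined by $2^{1/2}\bigl(\sqrt{2(2m-1)}/m\bigr)^{\beta}<1$ (using Kesten's identity $(2m)^{\eta(m)}=2\sqrt{2m-1}$). Simultaneously, Lemma~\ref{scwl} requires $\beta<\beta_{\max}(m):=(1-\eta(m))/(2\eta(m))$ so that some $\alpha>0$ can satisfy its hypothesis $\alpha<\beta_{\max}(m)-\beta$. The crucial numerical claim is that $\beta_{\min}(m)<\beta_{\max}(m)$ precisely when $m>64$; this is a direct algebraic verification substituting $(2m)^{\eta(m)}=2\sqrt{2m-1}$ and is the quantitative heart of the theorem.

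With $\beta\in(\beta_{\min}(m),\beta_{\max}(m))$ and $\alpha\in(0,\beta_{\max}(m)-\beta)$, the free-product small-cancellation hypothesis of Lemma~\ref{L: scfp} is also verified, because $\eta(m)>1/2$ yields $(1-\eta(m))/(2\eta(m))<1-\eta(m)$, so the inequality $\alpha<(1-\eta(m))-\beta$ holds automatically. For the Selberg family, the uniform bounds $3\leqslant\deg(v)\leqslant d$, $\g(\mathcal{S}_i)\to\infty$, and $\diam(\mathcal{S}_i)\leqslant C\g(\mathcal{S}_i)$ let us choose $j_0$ large, exactly as in the final paragraph of the proof of Theorem~\ref{T: ap2}, so that each of $P_{qi}$, $P_{sc}$, $P^*_{qi}$, $P^*_{sc}$ tends to $1$ exponentially as $i\to\infty$. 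The elementary Fr\'echet estimate $\mathbb{P}(A\cap B)\geqslant\mathbb{P}(A)+\mathbb{P}(B)-1$ applied to the events $A=\{Gr'(\alpha)\text{ holds}\}$ and $B=\{Gr_*'(\alpha)\text{ holds}\}$ yields the announced exponential convergence, and since $Gr'(\alpha)\Rightarrow Gr'(\alpha')$ for $\alpha'\geqslant\alpha$ (and similarly for the starred version), the same conclusion holds for any $\alpha'\geqslant\alpha$, in particular for $\alpha'=1/6$.

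The main obstacle is the numerical inequality $\beta_{\min}(m)<\beta_{\max}(m)$ at the threshold $m=64$, where the admissible interval of $\beta$ is narrow; isolating this explicit value is the chief extra bookkeeping compared to the Ollivier-Wise argument. Beyond this numerical check the proof is a routine combination of Lemmas~\ref{lqi}--\ref{L: scfp} with the closing asymptotic argument of Theorem~\ref{T: ap2}.
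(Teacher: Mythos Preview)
Your proposal is correct and follows essentially the same route as the paper's proof. The only cosmetic difference is that the paper fixes $\beta=1/4$ explicitly and checks directly that $2^{1/2}\bigl(\sqrt{2}(2m)^{\eta(m)}/(2m)\bigr)^{1/4}<1$ and $\tfrac{1-\eta(m)}{2\eta(m)}>1/4$ both hold for $m>64$, whereas you phrase the same computation as the non-emptiness of an interval $(\beta_{\min}(m),\beta_{\max}(m))$; the Fr\'echet/union-bound step and the use of Lemmas~\ref{lqi}, \ref{scwl}, \ref{L: fpqi}, \ref{L: scfp} are identical.
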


Observe that $\mathcal{S}$ does not need to satisfy condition \eqref{as: spectral} above.

 \begin{proof} Note that $1-\eta(m)>1/3$,  choose $\beta=1/4$ so that $1-\eta(m)>  \beta$, and  $\frac{1-\eta(m)}{2\eta(m)}-\beta >0$.  If $m\geqslant 64$, then, by an estimate as in the proof of Proposition~\ref{P: fix beta}, $$2^{\frac12}\left(\frac{\sqrt2 (2m)^{{\eta(m)}}}{2m}\right)^{1/4}<1.$$
  Choose $\alpha>0$ such that 
    \[
    \alpha<  \frac{1-{\eta}}{2\eta} - 1/4 \text{ and } \alpha<(1-{\eta})-1/4.
    \]
    
  The probability that $\fold\left({\widetilde{\mathcal{S}_i^j}}\right)$ does not satisfy the required small cancellation conditions is at most  
  \begin{align*}
 1-P_{sc}(m,d,\alpha,1/4,j)(i)P_{qi}&(m,d,\alpha,1/4,j)(i)+1\\&-P_{sc}^*(m,d,\alpha,1/4,j)(i)P^*_{qi}(m,d,\alpha,1/4,j)(i).  
  \end{align*}

   There exists $j_0$ such that for all $j>j_0$ we have that 
  \begin{itemize}
   \item $d^{2(C+1)}(2m)^{-(1-{\eta(m)})1/4 j}<1,$ 
 \item   $d^{2(C+1)}(2m)^{-(1-{\eta(m)})2j\alpha\left( \frac{1-{\eta(m)}}{\eta(m)} - 1/4\right)}<1,$ 
   \item $d^{(C+1)}\left(2^{\frac12}\left(\frac{\sqrt2 (2m)^{{\eta(m)}}}{2m}\right)^{1/4}\right)^j<1,$ and 
   \item $(jd)^4d^{2(C+1)}(2m)^{-(1-{\eta(m)})2j\alpha\left( 2(1-{\eta(m)}) - 1/4 \right)}<1.$
   \end{itemize}
   Then
   \[
         P_{sc}(m,d,\alpha,1/4,j)(i)P_{qi}(m,d,\alpha,1/4,j)(i)
        \]
 and simultaneously 
 \[
  P_{sc}^*(m,d,\alpha,1/4,j)(i)P^*_{qi}(m,d,\alpha,1/4,j)(i)
 \]
 tend to 1 exponentially as $i\to \infty$.

 \end{proof}

  Theorems \ref{T: ap1} and \ref{T: ap3} now follow as Theorem \ref{T: ap2}.

\begin{bibdiv}
\begin{biblist}

\bib{agol_virtual_2013}{article}{
   author={Agol, I.},
   title={The virtual Haken conjecture},
   note={With an appendix by Agol, D. Groves, and J. Manning},
   journal={Doc. Math.},
   volume={18},
   date={2013},
}

\bib{baumslag_unsolvable_1994}{article}{
   author={Baumslag, G.},
   author={Miller, C. F., III},
   author={Short, H.},
   title={Unsolvable problems about small cancellation and word hyperbolic
   groups},
   journal={Bull. London Math. Soc.},
   volume={26},
   date={1994},
   number={1},
   pages={97--101},
}

\bib{boone_word_59}{article}{
   author={Boone, W. W.},
   title={The word problem},
   journal={Ann. of Math. (2)},
   volume={70},
   date={1959},
   pages={207--265},
  }

\bib{borisov_simple_1969}{article}{
   author={Borisov, V. V.},
   title={Simple examples of groups with unsolvable word problem},
   language={Russian},
   journal={Mat. Zametki},
   volume={6},
   date={1969},
   pages={521--532},
  }

\bib{carter_new_2013}{article}{
	author={Carter, W.},
	title={New examples of torsion-free non unique product groups},
	 journal={J. Group Theory},
 date={2014},
 pages={445--464},
	}

\bib{ChCJJV}{book}{
    AUTHOR = {Cherix, P.-A.},
    AUTHOR = {Cowling, M.},
    AUTHOR = {Jolissaint, P.},
      AUTHOR = {Julg, P.},
       AUTHOR = {Valette, A.},
     TITLE = {Groups with the {H}aagerup property (Gromov's a-T-menability)},
    SERIES = {Progress in Mathematics},
    VOLUME = {197},
      PUBLISHER = {Birkh\"auser Verlag},
   ADDRESS = {Basel},
      YEAR = {2001},
     PAGES = {viii+126},
   
}

\bib{cohen_zero_1974}{article}{
 author={Cohen, J. M.},
   title={Zero divisors in group rings},
   journal={Comm. Algebra},
   volume={2},
   date={1974},
   pages={1--14},
  }

\bib{collins_simple_1986}{article}{
   author={Collins, D. J.},
   title={A simple presentation of a group with unsolvable word problem},
   journal={Illinois J. Math.},
   volume={30},
   date={1986},
   number={2},
   pages={230--234},
}

\bib{delzant_sur_1997}{article}{
 author={Delzant, T.},
   title={Sur l'anneau d'un groupe hyperbolique},
   journal={C. R. Acad. Sci. Paris S\'er. I Math.},
   volume={324},
   date={1997},
   number={4},
   pages={381--384},
 }

\bib{gromov_random_2003}{article}{
   author={Gromov, M.},
   title={Random walk in random groups},
   journal={Geom. Funct. Anal.},
   volume={13},
   date={2003},
   number={1},
   pages={73--146},
}

\bib{GMS}{article}{
   author={Gruber, D.},
   author={Martin, A.},
   author={Steenbock, M.},
   title={Finite index subgroups without unique product in graphical small
cancellation groups},
   status={preprint},
     date={2014}
 }

\bib{kaplansky_problems_1957}{inproceedings}{
  author={Kaplansky, I.},
   title={Problems in the theory of rings. Report of a conference on linear
   algebras, June, 1956, pp. 1-3},
   publisher={National Academy of Sciences-National Research Council},
   place={Washington, Publ. 502},
   date={1957},
   pages={v+60},
}

\bib{kaplansky_problems_1970}{article}{
 author={Kaplansky, I.},
   title={``Problems in the theory of rings'' revisited},
   journal={Amer. Math. Monthly},
   volume={77},
   date={1970},
   pages={445--454},
   issn={0002-9890},
   }

\bib{kesten_symmetric_1959}{article}{
   author={Kesten, H.},
   title={Symmetric random walks on groups},
   journal={Trans. Amer. Math. Soc.},
   volume={92},
   date={1959},
   pages={335--354},
   }

   \bib{linnell_strong_2012}{article}{
   author={Linnell, P.},
   author={Okun, B.},
   author={Schick, T.},
   title={The strong Atiyah conjecture for right-angled Artin and Coxeter
   groups},
   journal={Geom. Dedicata},
   volume={158},
   date={2012},
   pages={261--266},
}

\bib{lubotzky_discrete_1994}{book}{
   author={Lubotzky, A.},
   title={Discrete groups, expanding graphs and invariant measures},
   series={Progress in Mathematics},
   volume={125},
   note={With an appendix by Jonathan D. Rogawski},
   publisher={Birkh\"auser Verlag},
   place={Basel},
   date={1994},
   }

\bib{lyndon_combinatorial_1977}{book}{
  author={Lyndon, R. C.},
   author={Schupp, P. E.},
   title={Combinatorial group theory},
   note={Ergebnisse der Mathematik und ihrer Grenzgebiete, Band 89},
   publisher={Springer-Verlag},
   place={Berlin},
   date={1977},
   pages={xiv+339},
}

\bib{ollivier_sharp_2004}{article}{
   author={Ollivier, Y.},
   title={Sharp phase transition theorems for hyperbolicity of random
   groups},
   journal={Geom. Funct. Anal.},
   volume={14},
   date={2004},
   number={3},
   pages={595--679},
   issn={1016-443X},
  } 

\bib{ollivier_kazhdan_2007}{article}{
   author={Ollivier, Y.},
   author={Wise, D. T.},
   title={Kazhdan groups with infinite outer automorphism group},
   journal={Trans. Amer. Math. Soc.},
   volume={359},
   date={2007},
   number={5},
   pages={1959--1976},
}

\bib{olshanskii_residualing_1993}{article}{
   author={Ol{\cprime}shanski{\u\i}, A.},
   title={On residualing homomorphisms and $G$-subgroups of hyperbolic
   groups},
   journal={Internat. J. Algebra Comput.},
   volume={3},
   date={1993},
   number={4},
   pages={365--409},
}

\bib{pankratev_hyperbolic_1999}{article}{
 author={Pankrat{\cprime}ev, A. E.},
   title={Hyperbolic products of groups},
   journal={Vestnik Moskov. Univ. Ser. I Mat. Mekh.},
   date={1999},
   number={2},
   pages={9--13, 72},
   issn={0201-7385},
   translation={
      journal={Moscow Univ. Math. Bull.},
      volume={54},
      date={1999},
      number={2},
      pages={9--12},
      issn={0027-1322},
   },
}

\bib{passman_algebraic_1977}{book}{
 author={Passman, D. S.},
   title={The algebraic structure of group rings},
   series={Pure and Applied Mathematics},
   publisher={Wiley-Interscience [John Wiley \& Sons]},
   place={New York},
   date={1977},
   pages={xiv+720},
   isbn={0-471-02272-1},
}

\bib{promislow_simple_1988}{article}{
 author={Promislow, S. D.},
   title={A simple example of a torsion-free, nonunique product group},
   journal={Bull. London Math. Soc.},
   volume={20},
   date={1988},
   number={4},
   pages={302--304},
   issn={0024-6093},
}

\bib{rips_subgroups_1982}{article}{
   author={Rips, E.},
   title={Subgroups of small cancellation groups},
   journal={Bull. London Math. Soc.},
   volume={14},
   date={1982},
   number={1},
   pages={45--47},
}

\bib{rips_torsion-free_1987}{article}{
 author={Rips, E.},
   author={Segev, Y.},
   title={Torsion-free group without unique product property},
   journal={J. Algebra},
   volume={108},
   date={1987},
   number={1},
   pages={116--126},
   issn={0021-8693},
}

\bib{schreve_strong_2013}{article}{
	author={Schreve, K.},
	title={The Strong Atiyah conjecture for virtually cocompact special groups},
	journal={Math. Ann.},
	date={2014},
	pages={1--8},
	}

\bib{silberman_random_2003}{article}{
   author={Silberman, L.},
   title={Addendum to: ``Random walk in random groups'' [Geom.\ Funct.\
   Anal.\ {\bf 13} (2003), no.\ 1, 73--146] by M. Gromov},
   journal={Geom. Funct. Anal.},
   volume={13},
   date={2003},
   number={1},
   pages={147--177},
}

\bib{markus}{article}{
   author={Steenbock, M.},
TITLE = {Rips-Segev torsion-free groups without unique product},
eprint = {arxiv:1307.0981},
 YEAR = {2013},
 }

\end{biblist}
\end{bibdiv}

\end{document}